\newtheorem{theorem}{Theorem}[section]
\newtheorem{lemma}[theorem]{Lemma}
\newtheorem{proposition}[theorem]{Proposition}
\newtheorem{corollary}[theorem]{Corollary}
\theoremstyle{definition}
\newtheorem{definition}[theorem]{Definition}
\newtheorem{remark}[theorem]{Remark}
\newtheorem{example}[theorem]{Example}
\newcommand{\op}[1]{\operatorname{#1}}
\newcommand{\modmod}[1]{/ \! \! / \!_{#1}}
\newcommand{\newterm}{\textsf}
\newcommand{\dbcoh}[1]{\operatorname{D}^{\operatorname{b}}(\operatorname{coh }#1)}
\newcommand{\dabs}{\op{D}^{\op{abs}}}
\newcommand{\gm}{\mathbb{G}_m}
\newcommand{\Hom}{\text{Hom}}
\def\ker{\op{\mbox{ker}}}
\def\Z{\op{\mathbb{Z}}}
\def\C{\op{\mathbb{C}}}
\def\R{\op{\mathbb{R}}}
\def\Q{\op{\mathbb{Q}}}
\def\O{\op{\mathcal{O}}}
\def\A{\op{\mathbb{A}}}
\def\Gm{\op{\mathbb{G}_m}}
\def\n{n}
\def\tot{\op{tot}}
\newcommand{\Amp}{\operatorname{Amp}}
\newcommand{\Cone}{\operatorname{Cone}}
\title[Kuznetsov Categories for GLSMs]{Kuznetsov Categories for Gauged Linear Sigma Models}
\author[Favero]{David Favero}
\address{
	\begin{tabular}{l}
		David Favero \\
        \hspace{.1in} University of Minnesota, School of Mathematics \\
		\hspace{.1in} 206 Church Street, Minneapolis MN 55455, USA \\
		\hspace{.1in} Email: {\bf favero@umn.edu} \\
	\end{tabular}
}
\author[Kaplan]{Daniel Kaplan}
\address{
  \begin{tabular}{l}
   Daniel Kaplan  \\
   \hspace{.1in} California State University Long Beach, College of Natural Sciences and Mathematics \\
   \hspace{.1in} 1250 N Bellflower Blvd, Long Beach, CA 90840, USA \\
   \hspace{.1in} Email: {\bf dan.kaplan@csulb.edu} \\
  \end{tabular}
}
\author[Kelly]{Tyler L. Kelly}
\address{
  \begin{tabular}{l}
   Tyler L. Kelly \\
   \hspace{.1in} Queen Mary University of London, School of Mathematical Sciences \\
   \hspace{.1in} London, E1 4NS,  United Kingdom \\
   \hspace{.1in} Email: {\bf t.l.kelly@qmul.ac.uk} \\
  \end{tabular}
}
\numberwithin{equation}{section}
\begin{document}

\begin{abstract}
We define Kuznetsov and anti-Kuznetsov categories for gauged linear sigma models. We show that for complete intersections of ample divisors in smooth projective  toric varieties, the Kuznetsov category is left orthogonal to an exceptional collection. We prove that any complete intersection of $r\ge 2$ ample divisors in a Fano GIT quotient is a Fano visitor and the derived category of its Fano host is equivalent to an anti-Kuznetsov category of a gauged linear sigma model.
\end{abstract}

\maketitle
\setcounter{tocdepth}{1}
\vspace{-1cm}
\section{Introduction}

The derived category of coherent sheaves $\dbcoh{X}$ on a variety $X$ is an `almost-perfect' invariant of $X$, encapsulating much of its geometry and descending to many other invariants, such as cohomology and $K$-theory. When $\dbcoh{X}$ is decomposable, certain admissible subcategories can hold interesting information about $X$. First for  cubic fourfolds followed by other Fano varieties, Kuznetsov pioneered identifying such admissible subcategories as (left) orthogonals to exceptional objects \cite{Kuz05, Kuz09, Kuz10, KuzICM}.  

These components, now called  Kuznetsov (or residual) components of $\dbcoh{X}$, have since enjoyed considerable attention in the literature. For example, they have been used to probe rationality \cite{Kuz10, AT}, study mirror symmetry \cite{SheridanSmith}, explore Bridgeland stability \cite{BLMS}, and establish categorical Torelli theorems \cite{BMMS, HR, PS}.  Despite their range of applications in many examples, there is at present no uniform definition of such categories.

In this paper, we propose a uniform definition of Kuznetsov categories for gauged linear sigma models (GLSMs). Witten introduced gauged linear sigma models in physics as a generalized setting for the correspondence between Landau-Ginzburg models and Calabi-Yau manifolds \cite{W93}.  A GLSM contains the data needed to construct a (well-behaved) superpotential $w: X \rightarrow \A^1$ on a stack $X$ presented as a GIT quotient $X = [V\modmod{\theta} G]$ coming from a representation $V$ of an algebraic group $G$. Given a GLSM, there is a corresponding absolute derived category defined by Positselski \cite{P11} generalizing the category of matrix factorizations. 

Choices of GIT quotients of $V$ by $G$ are parameterized by the characters of $G$. The character $\theta$, determines the Zariski-open, (semi)stability locus $V_{\theta}$, and hence the GIT quotient $[V\modmod{\theta}G]$. Varying the character, one obtains relations between the various absolute derived categories (see, e.g.,\ \cite{HHP,  Segal, HW, DHL, VGIT,  HLS, SvdB}). 

We identify a distinguished character $\theta_K \in \widehat G$ and define a \newterm{Kuznetsov chamber} to be a chamber in the GIT fan which contains this distinguished character $\theta_K$ (Definition~\ref{def: Kuznetsov chamber}). We define a \newterm{Kuznetsov category} $\mathcal K$ of the GLSM to be the absolute derived category associated to this new GLSM corresponding to the superpotential $w_{K+\epsilon}: X_{K+\epsilon} \to \A^1$ where $X_{K+\epsilon} = [V\modmod{\theta_{K+\epsilon}} G]$ and $\theta_{K + \epsilon}$ is in the interior of said Kuznetsov chamber (Definition~\ref{def: Kuznetsov category}).  This perspective also has the advantage of an obvious symmetry; one can define an \newterm{anti-Kuznetsov category} $-\mathcal K$ as well using an anti-Kuznetsov chamber containing the character $\theta_{-K} := \theta_K^{-1}$.

The Kuznetsov (resp.\ anti-Kuznetsov) category apriori depends on the choice of chamber.   Nevertheless, it follows from the work of Ballard-Favero-Katzarkov \cite{BFKv2} that all choices of Kuznetsov (resp.\ anti-Kuznetsov) chambers  yield equivalent Kuznetsov (resp.\ anti-Kuznetsov) categories (see  Corollary~\ref{cor: independence}).

We then study Kuznetsov categories for \emph{geometric} GLSMs (see Definition~\ref{def: geometric}). In this case, $X$ is a (stacky) vector bundle $\mathcal {E}$ over some variety $B$ and the superpotential $w$ corresponds to a regular global section $s \in \Gamma(B, \mathcal{E}^\vee)$. The absolute derived category of the geometric GLSM is  equivalent to the derived category $\dbcoh{Z}$ of a complete intersection $Z:=Z(s)$ in $B$. We use this approach to provide new semi-orthogonal decompositions of complete intersections in terms of their Kuznetsov categories. In particular, for geometric abelian GLSMs where $\mathcal{E}$ is a split vector bundle consisting of anti-ample line bundles, we prove that the orthogonal to the Kuznetsov category is an exceptional collection. This has the following geometric reinterpretation:

\begin{theorem}[=Theorem~\ref{thm: main theorem}]\label{thm: intro thm 1}
Let $Z$ be a complete intersection of ample divisors in a smooth projective toric variety (or Deligne-Mumford stack).  Then, there are semiorthogonal decompositions
\begin{equation}\label{eq: intro kuz}
\dbcoh{Z} = \langle \mathcal K, E_1, \dots, E_t \rangle
\end{equation}
and
\begin{equation}\label{eq: intro antikuz}
-\mathcal K = \langle \dbcoh{Z}, E_1, \dots, E_s \rangle
\end{equation}
where the $E_i$ are exceptional objects and $\mathcal K$ (resp.\ $-\mathcal K$) is the Kuznetsov (resp.\ anti-Kuznetsov) category for the geometric GLSM associated to $Z$.
\end{theorem}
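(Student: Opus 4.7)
The plan is to realize $Z$ as the critical locus of a geometric abelian GLSM and leverage the variation of GIT results of Ballard--Favero--Katzarkov to relate the three chambers in play: the \emph{geometric} chamber (whose absolute derived category is $\dbcoh{Z}$), the Kuznetsov chamber (containing $\theta_K$), and the anti-Kuznetsov chamber (containing $\theta_{-K}$).

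First, I would fix a Cox-type presentation $B=[V^{ss}(\theta_B)/T]$ of the toric Deligne--Mumford stack as an abelian GIT quotient of $V=\A^N$ by a torus $T$, with the ample line bundles $L_1,\dots,L_r$ corresponding to characters $\chi_1,\dots,\chi_r \in \widehat{T}$. Then I would build the geometric GLSM on $\widetilde V = V \oplus \bigoplus_{i=1}^r V_{-\chi_i}$ with fiber coordinates $y_i$ of $T$-weight $-\chi_i$ and R-charge $1$, and $T\times\gm$-invariant superpotential $w = \sum_i y_i s_i$, where $s_i$ is the defining section of $L_i$. By standard Isik--Shipman--Hirano style results (subsumed by the geometric GLSM formalism invoked in the excerpt), in the chamber $\theta_\gamma$ for which the $y_i$ are unstable the absolute derived category is $\dabs(\widetilde X_\gamma, w_\gamma) \simeq \dbcoh{Z}$. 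By definition the absolute derived categories in the chambers of $\theta_{K+\epsilon}$ and $\theta_{-K+\epsilon}$ are $\mathcal K$ and $-\mathcal K$, respectively.

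Next, I would compare these chambers via wall-crossing. Using the main VGIT theorem of \cite{BFKv2}, each wall between $\theta_\gamma$ and $\theta_{K+\epsilon}$ yields a semi-orthogonal decomposition whose extra pieces are indexed by a bounded set of characters of the one-parameter subgroup destabilizing the wall and whose factors are absolute derived categories of the $w$-pullback to the fixed locus $Z_\alpha$ of that one-parameter subgroup. In the abelian setting, the walls are codimension-one rays and the fixed loci are themselves abelian GIT quotients of linear subspaces. The key calculation is to show that, because $\mathcal E = \bigoplus L_i^{-1}$ is built from anti-ample line bundles, each such fixed-locus contribution is concentrated on a point of $V$ after restriction and the pulled-back superpotential becomes trivial there; hence every wall-crossing factor collapses to $\dbcoh{\op{pt}}$, producing a single exceptional object. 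Concatenating the SODs across all walls between $\theta_\gamma$ and $\theta_{K+\epsilon}$ yields \eqref{eq: intro kuz}, and the symmetric analysis between $\theta_\gamma$ and $\theta_{-K+\epsilon}$ yields \eqref{eq: intro antikuz}, with $\dbcoh{Z}$ now appearing on the left of the SOD because the geometric chamber lies on the opposite side of $\theta_\gamma$ relative to $\theta_{-K}$ as it does relative to $\theta_K$. Independence of the specific Kuznetsov chamber reduces to Corollary~\ref{cor: independence}.

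The main obstacle is the collapse step: given only that the $L_i$ are ample on $B$, one must verify that at every relevant wall the destabilizing stratum is a linear subspace on which all the $s_i$ vanish in a controlled way, so that the fixed-locus GLSM has no interior critical points and contributes only structure sheaves of torus fixed points (i.e.\ exceptionals). Unwinding this amounts to showing that the ``$\mu$-window'' grade-restriction intervals produced by \cite{BFKv2} between $\theta_\gamma$ and $\theta_{\pm K+\epsilon}$ are saturated precisely by a set of line bundles on $B$ which, when restricted to $Z$, form the desired exceptional collection $E_1,\ldots,E_t$ (respectively $E_1,\ldots,E_s$). The ampleness hypothesis is what guarantees nonzero length windows on one side and empty windows on the other, pinning down on which side of the wall $\dbcoh{Z}$ lives in each SOD and giving the two directionalities in \eqref{eq: intro kuz} and \eqref{eq: intro antikuz}.
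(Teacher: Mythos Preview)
Your overall strategy---realize $Z$ via a geometric abelian GLSM, identify $\dbcoh{Z}$ with the absolute derived category in the geometric chamber via Isik--Shipman--Hirano, and then run a straight-line wall-crossing toward $\theta_{K}$ (resp.\ $\theta_{-K}$) using the Ballard--Favero--Katzarkov machinery---matches the paper exactly. The gap is in your analysis of the wall contributions.

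You assert that each fixed-locus contribution ``is concentrated on a point of $V$'' so that every wall-crossing factor is $\dbcoh{\op{pt}}$ and contributes a single exceptional object. This is false in general: for a wall defined by a one-parameter subgroup $\lambda$ of $G$, the fixed locus $(V\times\C^r)^\lambda$ is a coordinate linear subspace, and its GIT quotient by $G/\lambda$ is typically a positive-dimensional toric Deligne--Mumford stack, not a point. The paper does \emph{not} reduce these pieces to points. Instead it proves two technical facts that use ampleness of the $D_i$ in an essential way:
\begin{itemize}
\item[(a)] the cone $C^{\mathcal E}_W$ generated by the weights appearing on the fixed locus is \emph{strongly convex} (Proposition~\ref{prop:strongly convex weight cone}, via the separation Lemma~\ref{prop: ample cone} which uses that $\Amp(X)\cap C^X_W=\emptyset$); this forces the restricted superpotential $w|_{(V\times\C^r)^\lambda}$ to vanish identically (Lemma~\ref{lem: w=0}), so that the wall piece is an honest derived category $\dbcoh{[(V^\lambda)_{\theta_W}/(\ker\chi/\lambda)]}$ rather than a factorization category;
\item[(b)] the same strong convexity, via Gale duality, makes the fan of that quotient complete, so the quotient is a smooth \emph{projective} toric DM stack.
\end{itemize}
Only then does one invoke Kawamata's theorem that every smooth projective toric DM stack has a full exceptional collection, and this is where the exceptional objects $E_i$ actually come from---not from single points, and not from line bundles on $B$ restricted to $Z$.

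Your alternative ``window'' reformulation at the end is also off the mark: there is no reason the grade-restriction window should be filled by line bundles which remain exceptional after restriction to $Z$ (that would require vanishing statements on $Z$ you have not established). The ampleness hypothesis is not used to control window lengths or SOD directionality in the way you describe; it is used precisely to force strong convexity of $C^{\mathcal E}_W$, without which the wall pieces could be factorization categories over non-proper stacks and the argument would collapse. The directionality of the two SODs comes simply from the fact that along a straight-line path to $\theta_{K}$ (resp.\ $\theta_{-K}$), the endpoint automatically lies on the ``small'' side of every wall crossed, so the categories shrink (resp.\ grow) along the path.
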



This result can be viewed as a toric analogue of Orlov's theorem ~\cite[Theorem 3.11]{Orlov}. As in Orlov's theorem, more is known when we specialize to the hypersurface case:

\begin{theorem}[=Theorem~\ref{thm: Fano CY category}]
Let $Z$ be a smooth hypersurface in a smooth projective Fano toric variety $X$ (or Deligne-Mumford stack). Assume $\O_X(Z) \cong \omega_X^{-q}$ for some $q\in \Q_{>0}$.  
\begin{enumerate}
\item If $q\leq 1$ then the Kuznetsov category $\mathcal K$ in~\eqref{eq: intro kuz} is fractional  Calabi-Yau. 
\item If $q\geq 1$ then the anti-Kuznetsov category $-\mathcal K$ in~\eqref{eq: intro antikuz} is fractional Calabi-Yau.  
\item If $q = \frac{1}{r}$ for some $r \in \Z$ and $\O_X(Z)$ is Cartier, then $\mathcal K$ in~\eqref{eq: intro kuz} is Calabi-Yau. 
\end{enumerate}
\end{theorem}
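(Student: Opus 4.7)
The plan is to compute the Serre functor on the Kuznetsov category $\mathcal K$ (respectively the anti-Kuznetsov $-\mathcal K$) directly from the GLSM description, and show that it becomes a (possibly fractional) power of the cohomological shift via the R-charge $\mathbb{G}_m$-action on the factorization category.

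For the hypersurface $Z \subset X$, write $X = [V_0 \modmod{\theta_X} G]$ for the toric presentation. The geometric GLSM then has total space $V = V_0 \oplus \mathbb{A}^1_p$, with $G$ acting on the extra coordinate $p$ by the character $-[\mathcal{O}_X(Z)]$, and superpotential $w = pf$ where $f$ is a defining equation of $Z$. By Definition~\ref{def: Kuznetsov chamber}, the distinguished character $\theta_K$ selects the Landau--Ginzburg phase whose absolute derived factorization category is $\mathcal K$. An additional R-charge $\mathbb{G}_m$-action, assigning $p$ weight $2$ and fixing $V_0$, makes $w$ semi-invariant of weight $2$, and on the factorization category produces an isomorphism $[2] \simeq (-) \otimes \mathcal{O}_X(Z)$. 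Under $\mathcal{O}_X(Z) \cong \omega_X^{-q}$, this identifies tensoring by $\omega_X^{-q}$ with the cohomological shift $[2]$.

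Next I would invoke the Ballard--Favero--Katzarkov wall-crossing theorem to realize both $\dbcoh{Z}$ and $\mathcal K$ as grade-restriction windows in $\dabsFact{[V/G]}$ compatibly with Serre functors. Via mutation through the exceptional collection in Theorem~\ref{thm: main theorem}, the Serre functor on $\mathcal K$ differs from the Serre functor $(-) \otimes \omega_Z [\dim Z]$ on $\dbcoh{Z}$ only by a twist supported on the exceptional orthogonal. Adjunction computes $\omega_Z = \omega_X^{1-q}|_Z$, and the R-charge identification converts $(-) \otimes \omega_X^{1-q}$ into the fractional shift $[2(q-1)/q]$. Thus $S_{\mathcal K}$ becomes the fractional shift $[\dim Z + 2(q-1)/q]$, proving (i). Part (ii) follows by the symmetric argument applied to the second SOD of Theorem~\ref{thm: main theorem}: $S_{-\mathcal K}$ is computed via mutation through the exceptional collection extending $\dbcoh{Z}$ to $-\mathcal K$, and the same R-charge reduction yields fractional Calabi--Yau for $q \ge 1$. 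For (iii), when $q = 1/r$ with $r \in \mathbb{Z}_{>0}$ and $\mathcal{O}_X(Z)$ Cartier, the exponent $2(q-1)/q = 2(1-r)$ is a genuine integer, so the fractional shift collapses to the ordinary shift $[\dim Z + 2(1-r)]$, giving honest Calabi--Yau.

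The main obstacle will be justifying the Serre-functor computation rigorously. In particular, one must verify that the mutation-induced Serre functor on $\mathcal K$ (or $-\mathcal K$) truly agrees with the window-level restriction of the ambient Serre functor, and that the R-charge isomorphism $[2] \simeq (-) \otimes \omega_X^{-q}$ is compatible with the grade-restriction decomposition. The Cartier hypothesis in (iii) is essential in the Deligne--Mumford stack case: it guarantees that $\omega_X^{-q}$ is represented by an honest line bundle rather than a fractional character on stabilizers, so the fractional shift can indeed collapse to an integer shift.
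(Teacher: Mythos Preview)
Your approach differs substantially from the paper's. The paper does not compute the Serre functor via mutation at all; it invokes \cite[Proposition~4.9, Lemma~4.24, Corollary~5.6]{FK18} directly. Under the hypothesis $D=-qK_X$ the support $|\Sigma^{\mathcal E}|$ of the fan for $\tot\,\mathcal O_X(-D)$ is a $\Q$-Gorenstein cone (and almost Gorenstein when $q=1/r$ with $D$ Cartier), and for $q\le 1$ (resp.\ $q\ge 1$) the Kuznetsov (resp.\ anti-Kuznetsov) chamber yields a fan whose rays all lie at height one in this cone. The fractional (or genuine) Calabi--Yau property is then read off from the cited corollary, which computes the Serre functor intrinsically on the factorization category of that phase. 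No reference to $\dbcoh{Z}$, to mutation, or to the exceptional collection of Theorem~\ref{thm: main theorem} is needed.

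Your route has a genuine gap at the mutation step. You assert that ``the Serre functor on $\mathcal K$ differs from $(-)\otimes\omega_Z[\dim Z]$ only by a twist supported on the exceptional orthogonal'' and then feed that twist into the R-charge identity $[2]\simeq(-)\otimes\omega_X^{-q}$. But the Serre functor on an admissible subcategory is the ambient Serre functor followed by projection, i.e.\ the composite of the left mutations through $E_1,\dots,E_t$; there is no a~priori reason this composite reduces to tensoring by a power of $\omega_X|_Z$. The R-charge relation lives on the factorization side, while $\omega_Z=\omega_X^{1-q}|_Z$ lives on $Z$; to move between them you would already need to know that some integer power of $S_{\mathcal K}$ is tensoring by a character on the Kuznetsov phase, which is precisely the nontrivial content of the statement. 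In Kuznetsov's original arguments this succeeds because the exceptional collection has a rectangular Lefschetz structure, so the rotation functor iterates to a line-bundle twist; the collection produced by Theorem~\ref{thm: main theorem} is not shown to have any such structure. You flag this yourself as ``the main obstacle,'' and it is: the clean resolution is exactly what the paper does---compute the Serre functor intrinsically on $\mathcal K=\dabs(V_{\theta_{K+\epsilon}},\Gamma,w)$ via the Gorenstein-cone criterion of \cite{FK18}, bypassing $\dbcoh{Z}$ and mutation entirely.
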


In 2011, Bondal asked \cite{BBF}: Which smooth projective varieties $X$ have derived categories that are admissible subcategories of the derived category of some smooth projective Fano variety $Y$? If such an $X$ and $Y$ exist, we call $X$ a \newterm{Fano visitor} and $Y$ a \newterm{Fano host}. Anti-Kuznetsov categories provide good candidates for Fano hosts. 
\begin{theorem}[=Corollary \ref{cor: xmas miracle}]
Any complete intersection of $r\ge 2$ ample divisors in a Fano GIT quotient $X$ is a Fano visitor and the derived category of the Fano host is equivalent to an anti-Kuznetsov category of a GLSM.
\end{theorem}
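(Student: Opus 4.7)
The plan is to apply the anti-Kuznetsov decomposition \eqref{eq: intro antikuz} of Theorem~\ref{thm: intro thm 1} and then identify $-\mathcal K$ with $\dbcoh{Y}$ for a smooth projective Fano variety $Y$, which will serve as the Fano host.

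Writing $Z = Z(s_1, \ldots, s_r) \subset X$ with $s_i \in \Gamma(X, \mathcal L_i)$, the associated geometric GLSM lives on the total space of $\mathcal E = \bigoplus_{i=1}^r \mathcal L_i^\vee$ with superpotential $w = \sum_i y_i s_i$ linear in the fiber coordinates. In the Kuznetsov chamber the absolute derived category is $\dbcoh{Z}$. In the anti-Kuznetsov chamber, the $\Gm$-stability flips so the fiber coordinates $y_i$ become projective: the GIT quotient is the projective bundle $\PP_X(\mathcal E^\vee)$ and $w$ now cuts out the Cayley hypersurface $Y := \{w = 0\}$. A Kn\"orrer-type reduction in the variation-of-GIT framework of \cite{BFKv2} should identify the absolute derived category of this chamber with $\dbcoh{Y}$, yielding $-\mathcal K \simeq \dbcoh{Y}$.

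It remains to check that $Y$ is smooth, projective, and Fano. Smoothness follows from the smoothness of $Z$ together with Bertini-type considerations using the ampleness of the $\mathcal L_i$; projectivity is automatic. Fano-ness is an adjunction computation on $\pi \colon \PP_X(\mathcal E^\vee) \to X$: the relative Euler sequence gives
\begin{equation*}
\omega_Y^{-1} \simeq \left( \mathcal O_{\PP}(r-1) \otimes \pi^* \left(\omega_X^{-1} \otimes \textstyle\bigotimes_i \mathcal L_i^{-1}\right)\right)\Big|_Y.
\end{equation*}
Since $\mathcal E^\vee = \bigoplus \mathcal L_i$ is a sum of ample line bundles, $\mathcal O_\PP(1)$ is ample on $\PP_X(\mathcal E^\vee)$; combined with $X$ being Fano this should yield ampleness of $\omega_Y^{-1}$ for $r \geq 2$. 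The hypothesis $r \geq 2$ is essential: for $r = 1$ the projective bundle collapses to $X$ and the Cayley hypersurface is $Z$ itself, which need not be Fano.

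Putting this together, \eqref{eq: intro antikuz} reads $\dbcoh{Y} \simeq \langle \dbcoh{Z}, E_1, \ldots, E_s \rangle$, exhibiting $\dbcoh{Z}$ as an admissible subcategory of the derived category of the smooth projective Fano variety $Y$. The main obstacle is the identification $-\mathcal K \simeq \dbcoh{Y}$, which requires a careful analysis of the anti-Kuznetsov chamber's geometry and a Kn\"orrer-periodicity argument adapted to the stacky GIT setting used throughout the paper.
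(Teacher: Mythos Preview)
Your approach has two genuine gaps.

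First, Theorem~\ref{thm: intro thm 1} (= Theorem~\ref{thm: main theorem}) is proved only for abelian GLSMs, i.e.\ for complete intersections in smooth projective \emph{toric} varieties or stacks. Corollary~\ref{cor: xmas miracle} is stated for an arbitrary Fano GIT quotient, with no abelian hypothesis on $G$, so you cannot invoke \eqref{eq: intro antikuz} here.

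Second, and more seriously, your identification of the anti-Kuznetsov phase with the projective bundle does not hold. In the geometric GLSM attached to $Z$ one has $\Gamma = G \times \Gm$ with $\chi$ the projection onto the $\Gm$-factor, so $\ker\chi = G$ and the (anti-)Kuznetsov chambers live in $\widehat G_{\Q}$. One varies only the $G$-stability; the $\Gm$ that dilates the fiber coordinates $u_i$ is the $\chi$-factor and is \emph{not} part of the GIT problem defining the chambers. No choice of $\theta\in\widehat G$ will projectivize the $u_i$, and the $G$-quotient in the anti-Kuznetsov chamber is in general some other space, not $\tot\,\mathcal O_{rel}(-1)$ over $\PP_X(\mathcal E^\vee)$. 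Consequently there is no reason for the $-\mathcal K$ of this GLSM to coincide with $\dbcoh{Y}$.

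The paper instead constructs a \emph{different}, larger GLSM on $V\times W^\vee\times\mathbb C$ with group $G\times\Gm\times\Gm$, adding both an extra coordinate and an extra $\Gm$ (Theorem~\ref{thm: fano visitor}). A single explicit elementary wall-crossing, taken directly from \cite[Theorem~3.5.2]{VGIT} rather than from the GKZ fan, yields
\[
\dbcoh{Y}=\langle \dbcoh{X},\dots,\dbcoh{X}\otimes\mathcal O_{rel}(\dim W-2),\ \dbcoh{Z}\rangle,
\]
with copies of $\dbcoh{X}$ (not merely exceptional objects) as complement; this step needs no abelian assumption. The corollary then reduces to checking (i) the hypotheses of \cite[Lemma~3.1]{KKLL}, which is essentially your adjunction computation, so that $Y$ is Fano, and (ii) that $(V,W,\pi(\theta_{-K+\epsilon}))$ is a positive triple (Definition~\ref{def:positive throuple}), so that $\dbcoh{Y}$ is the anti-Kuznetsov category of this \emph{new} GLSM. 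The GLSM realising $\dbcoh{Y}$ as an anti-Kuznetsov category is thus not the one you started with.
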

\noindent This generalizes the main result of \cite{KKLL} which shows that any complete intersection in projective space is a Fano visitor.  

\subsection*{Acknowledgments}
The first author was supported by the NSF under DMS award numbers 2302262 and 2412039.
The second author was supported by the EPSRC Mathematical Sciences Small Grant EP/Y033574/1. The third author was supported by the UKRI Future Leaders Fellowship MR/T01783X/1, its renewal MR/Y033841/1, and the EPSRC Mathematical Sciences Small Grant EP/Y033574/1. 

\section{Kuznetsov Categories for GLSMs}

In this section, we define the Kuznetsov category of a GLSM.  Roughly, we take a chamber of the corresponding GIT problem corresponding to the canonical class and take its corresponding absolute derived category (see \cite{P11} and \cite[Definition 3.9]{BFK14a}).  We expand on these notions below.

The precise mathematical definition of a GLSM consists of a 5-tuple satisfying several conditions. Our definition of a Kuznetsov category can be given for the 5-tuple, without the need for any conditions.   

\begin{definition} \label{defn:glsm}
    A \newterm{gauged linear sigma model (GLSM)} is the data $(V, \Gamma, \chi, \theta, w)$ consisting of:
    \begin{itemize}
        \item a complex vector space $V$;
       \item a reductive algebraic group $\Gamma \subset \text{GL}(V)$ acting linearly on $V$;
       \item a surjective character $\chi: \Gamma \to \mathbb C^*$;
        \item a character $\theta: \ker \chi  \rightarrow \C^*$ such that the semistable locus with respect to $\theta$ is $\Gamma$-invariant; and
         \item a $\chi$-\newterm{semi-invariant superpotential} $w: V \rightarrow \A^1$, that is, a regular map $w$ satisfying $w(g \cdot x) = \chi(g)w(x)$, for all $x \in V$ and $g \in \Gamma$.
    \end{itemize}
\end{definition}

\begin{remark}
        The semi-invariance of $w$ means that it defines a section of the equivariant line bundle $\O(\chi)$ on the quotient stack $[X/G]$, i.e., $w\in \Gamma(X, \O(\chi))^G$.
\end{remark}

\begin{definition}
We say a GLSM is \newterm{abelian} if $\Gamma$ is an abelian group. 
\end{definition}

 \begin{remark}If a GLSM is abelian, then since $\Gamma$ is reductive, one can choose a basis for $V$ so that $\Gamma \subseteq (\Gm)^{\n}$. Consequently, abelian GLSMs are amenable to toric geometry.\end{remark}

We recall the setup of geometric invariant theory (GIT). Let $G$ be a group acting on a vector space $V$. A GIT quotient $V\modmod{\theta} G$ depends on a choice of character $\theta: G \rightarrow \Gm$; it is the closed $G$-orbits acting on the $\theta$-semistable locus, $V_{\theta}$. This also gives us a stacky GIT quotient $[V\modmod{\theta}G] := [V_\theta/G]$.  The variety $V\modmod{\theta}G$ is isomorphic to the stack $[V\modmod{\theta}G]$ when $G$ acts freely on $V_\theta$.

Define $\widehat{G}_{\Q} := \Hom(G, \Gm) \otimes_{\Z} \Q$. The different semistable loci partition $\widehat{G}_{\Q}$ into subsets
$\{ \tau' \in \widehat{G}_{\Q} \mid V_{\tau} = V_{\tau'} \}.$
If a point $p \in V$ is both $\tau_1$-semistable and $\tau_2$-semistable then it is $(n_1 \tau_1 + n_2 \tau_2)$-semistable for all $n_1, n_2 > 0$. Therefore, we define 
\[
\sigma_\tau:= \overline{\{ \tau' \in \widehat{G}_{\Q} \mid V_{\tau} = V_{\tau'} \}},
\]
which is a (rational) cone. The set of all such cones form a fan called the \newterm{GIT fan} (see \cite{Ress00}).  In the abelian case, this fan is also called the GKZ fan or secondary fan. The maximal cones of this fan are called \newterm{chambers} and the codimension one cones \newterm{walls}.

\begin{remark} \label{rem:stability_vs_equivariance}
    We define the semistable locus $V_{\theta}$ using a character of the \emph{subgroup} $\ker \chi \subset \Gamma$, while requiring that the factorization category is equivariant with respect to the \emph{entire group} $\Gamma$. This holds, for instance, if the GLSM is abelian, geometric (see Definition~\ref{def: geometric}), or has a good lift in the sense of \cite[Section 1.1]{FJR18}.

\end{remark}

We use GIT in the GLSM context setting $G = \ker(\chi)$. The inclusion
\[
\iota: \ker(\chi) \hookrightarrow \Gamma \hookrightarrow \text{GL}(V)
\]
equips $\ker(\chi)$ with the canonical character
\begin{equation}\label{eqref: canonical character}
\theta_K : \ker(\chi) \hookrightarrow \text{GL}(V) \rightarrow \C^* \hspace{1cm} g \mapsto \det(\iota(g))^{-1}.
\end{equation}
Let $\theta_{-K} := \theta_K^{-1}$, denote the anti-canonical character.

\begin{definition}\label{def: Kuznetsov chamber}
A \newterm{Kuznetsov chamber} is a chamber $\sigma_K$ which contains $\theta_K$.  An \newterm{anti-Kuznetsov chamber} is a chamber $\sigma_{-K}$ which contains $\theta_{-K}$. 
\end{definition}

\begin{definition}\label{def: Kuznetsov category}
    Given a GLSM $\mathcal{G} := (V, \Gamma, \chi, \theta, w)$ we say a factorization category
    $$
    \mathcal K := \dabs(V_{\theta_{K+\epsilon}}, \Gamma, w)
    $$
    is a \newterm{Kuznetsov category} for $\mathcal G$ if $\theta_{K+\epsilon} \in \widehat{\ker(\chi)}_{\Q}$ lies in the interior of a chamber containing $\theta_K$. Analogously, we say a factorization category
    $$
    -\mathcal K := \dabs(V_{\theta_{-K-\epsilon}}, \Gamma, w)
    $$
    is an \newterm{anti-Kuznetsov category} for $\mathcal G$ if $\theta_{-K-\epsilon}$ lies in the interior of a chamber containing $\theta_{-K}$.
\end{definition}

\begin{remark}
By definition, a Kuznetsov category is the absolute derived category of the GLSM $(V, \Gamma, \chi, \theta_{K+\epsilon}, w)$. This transparently does not depends on $\theta$ from the original GLSM.  On the other hand, it apriori depends on the choice of $\theta_{K+\epsilon}$ or, equivalently, the choice of Kuznetsov chamber. 
\end{remark}

If $\sigma_K$ is in a chamber of the GIT fan, then the Kuznetsov chamber is unique. However, if $\theta_K$ is on a wall (or a higher codimension cone), then there can be several Kuznetsov chambers. 
The following theorem \cite[Theorem 5.2.1]{BFKv2} allows us to prove that their corresponding Kuznetsov categories are equivalent in the abelian case.

\begin{theorem}\label{thm: BFK wall crossing} 
Let $(V, \Gamma, \chi, \theta_+, w)$ and $(V, \Gamma, \chi, \theta_-, w)$ be abelian GLSMs so that $\theta_+, \theta_-$ lie in adjacent chambers sharing a common wall defined by a primitive one parameter subgroup $\lambda$ of $\ker \chi$ normalized so that $\theta_+ \circ \lambda(t) = t^l$ and $l >0$.
\begin{enumerate}
\item If $\theta_K \circ \lambda (t)=1$, there is an equivalence
$$
 \dabs( V_{\theta_-}, \Gamma, w ) \cong \dabs(V_{\theta_+}, \Gamma, w).
$$
 \item If $\theta_K \circ \lambda (t) = t^r$  and $r>0$, there is a semi-orthogonal decomposition
$$
 \dabs( V_{\theta_-}, \Gamma, w ) = \langle \mathcal W_1, ..., \mathcal W_r, \dabs( V_{\theta_+}, \Gamma, w ) \rangle,
$$
where $\mathcal W_i \cong \dabs((V^\lambda)_{\theta_W}), \Gamma/\lambda, w|_{V^\lambda})$ for all $i$ where $V^\lambda$ is the fixed locus of $\lambda$, $\theta_W$ is any character in the relative interior of the wall descended to $\Gamma/\lambda$.
\end{enumerate}
\end{theorem}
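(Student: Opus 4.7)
The plan is to invoke the window method for variation of GIT applied to the primitive one-parameter subgroup $\lambda$. First, I would decompose $V$ into $\lambda$-weight spaces $V = V^- \oplus V^\lambda \oplus V^+$. Choosing $\theta_W$ in the relative interior of the wall shared by the two chambers, the Kirwan-Ness analysis shows that $V_{\theta_W} \setminus V_{\theta_\pm}$ consists of $\Gamma$-orbits of points whose $\lambda^{\pm 1}$-limit lies in $V^\lambda$, i.e., the two Bia{\l}ynicki-Birula cells attached to $V^\lambda$. Since $\lambda \subset \ker \chi$, the superpotential $w$ is $\lambda$-invariant, so these cells carry induced stratifications of the category of factorizations.

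Next, I would introduce grade-restriction windows. For integers $a \le b$, let $\mathcal{W}[a,b] \subset \dabs(V_{\theta_W}, \Gamma, w)$ be the full subcategory of factorizations whose restriction to $(V^\lambda)_{\theta_W}$ is supported in $\lambda$-weights in $[a,b]$. The technical heart of \cite{BFKv2} is to show, via a Koszul resolution along a Bia{\l}ynicki-Birula cell together with the requisite Ext-vanishing, that for integer widths $N_+$ and $N_-$ determined by the $\lambda$-weights on $V^\mp$, the restriction functors
\[
\mathcal{W}[a, a + N_\pm - 1] \longrightarrow \dabs(V_{\theta_\pm}, \Gamma, w)
\]
are equivalences.

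Finally, I would match the two windows using the canonical character. A direct determinantal computation gives $\theta_K \circ \lambda(t) = t^{-(\mu_+ + \mu_-)}$, where $\mu_\pm$ are the sums of the positive and negative $\lambda$-weights on $V$, and $N_- - N_+ = r$. In case (i) where $r=0$, the window widths agree, so a common window $\mathcal{W}[a, a + N_+ - 1]$ realizes both categories, producing the equivalence. In case (ii) with $r>0$, the $\theta_-$-window strictly contains the $\theta_+$-window and the complement decomposes into $r$ weight slices $\mathcal{W}[a + N_+ + i -1, a + N_+ + i - 1]$ for $i = 1, \ldots, r$. Because $w$ is $\lambda$-invariant, each single-weight slice identifies, by Morita descent along $V^\lambda$, with $\dabs((V^\lambda)_{\theta_W}, \Gamma/\lambda, w|_{V^\lambda})$, giving the claimed semi-orthogonal decomposition with the $\mathcal{W}_i$ on the left and $\dabs(V_{\theta_+}, \Gamma, w)$ on the right.

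The main obstacle is proving the window equivalence at the precise width $N_\pm$: showing on the one hand that window objects are Ext-orthogonal to the subcategory of factorizations supported on the unstable stratum, and on the other that every object of $\dabs(V_{\theta_\pm}, \Gamma, w)$ admits a canonical lift to the window. Both rely on the explicit Koszul resolution of a Bia{\l}ynicki-Birula cell, together with the determinantal identity $\theta_K \circ \lambda = \det(\iota \circ \lambda)^{-1}$, which is exactly where the canonical character enters and controls the width discrepancy $r$.
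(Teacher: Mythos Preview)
The paper does not supply a proof of this theorem; it is quoted verbatim as \cite[Theorem~5.2.1]{BFKv2} and used as a black box. So there is no in-paper argument to compare your proposal against.

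That said, your sketch is faithful to the approach actually taken in \cite{BFKv2} (and the closely related \cite{DHL, Segal}): the Bia{\l}ynicki--Birula stratification of the wall quotient by the primitive one-parameter subgroup $\lambda$, grade-restriction windows of widths $N_\pm$ given by the sums of positive/negative $\lambda$-weights on $V$, the identification of the width discrepancy with the pairing $\langle \theta_K, \lambda\rangle = r$, and the recognition of each single-weight slice with the factorization category on the fixed locus. The one point to be careful about is the identification of the window widths: in the factorization setting one must account for the $\lambda$-weight of $\chi$ as well, but since $\lambda \subset \ker\chi$ this contribution vanishes and the naive determinantal count $r = -(\mu_+ + \mu_-)$ is correct, exactly as you state. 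For the purposes of this paper, simply citing \cite[Theorem~5.2.1]{BFKv2} is all that is required.
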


\begin{corollary} \label{cor: independence}
If $\mathcal{G}$ is abelian GLSM, then the Kuznetsov category $\mathcal{K}$ is independent of the choice of Kuznetsov chamber $\sigma_K$. Similarly, the anti-Kuznetsov category $-\mathcal{K}$ is independent of the choice of anti-Kuznetsov chamber $\sigma_{-K}$.
\end{corollary}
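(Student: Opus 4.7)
The plan is to interpolate between any two Kuznetsov chambers by a finite sequence of wall crossings, all of which lie in the ``star'' of $\theta_K$, and apply part (1) of Theorem~\ref{thm: BFK wall crossing} at each step.

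More precisely, let $\sigma_K$ and $\sigma_K'$ be two Kuznetsov chambers, both containing $\theta_K$. By definition, $\theta_K$ lies in the closure $\overline{\sigma_K} \cap \overline{\sigma_K'}$, so both chambers belong to the star $\operatorname{Star}(\tau)$ of the minimal cone $\tau \subseteq \widehat{\ker\chi}_{\Q}$ of the GIT fan containing $\theta_K$. Since the GIT fan of an abelian GIT problem is complete (being a refinement of the fan of $\widehat{\ker\chi}_{\Q}$ by the hyperplanes defined by weights of $V$), the star $\operatorname{Star}(\tau)$ is itself a complete fan in the quotient space $\widehat{\ker\chi}_{\Q}/\langle \tau \rangle$. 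In particular, the dual graph of its maximal cones is connected, so there is a sequence of chambers $\sigma_K = \sigma_0, \sigma_1, \ldots, \sigma_N = \sigma_K'$, each containing $\theta_K$, with $\sigma_i$ and $\sigma_{i+1}$ sharing a common wall $W_i$ that also contains $\theta_K$.

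Now I apply Theorem~\ref{thm: BFK wall crossing} at each wall. Let $\lambda_i$ be the primitive one-parameter subgroup defining the wall $W_i$, normalized so that pairing with a chosen $\theta_{+,i} \in \sigma_{i+1}$ is positive. Because $\theta_K$ lies on $W_i$, the pairing $\theta_K \circ \lambda_i$ is the trivial character $t \mapsto 1$. Picking $\theta_{K+\epsilon}^{(i)}$ in the interior of $\sigma_i$ sufficiently close to $\theta_K$, part (1) of Theorem~\ref{thm: BFK wall crossing} yields an equivalence
\[
\dabs(V_{\theta_{K+\epsilon}^{(i)}}, \Gamma, w) \;\cong\; \dabs(V_{\theta_{K+\epsilon}^{(i+1)}}, \Gamma, w).
\]
Composing these equivalences gives an identification of the Kuznetsov category built from $\sigma_K$ with the one built from $\sigma_K'$. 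The anti-Kuznetsov statement follows by the identical argument applied to $\theta_{-K} = \theta_K^{-1}$.

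The main obstacle I anticipate is the fan-theoretic claim that one can connect any two chambers containing $\theta_K$ by a sequence of wall crossings whose walls all contain $\theta_K$. This is the connectivity of $\operatorname{Star}(\tau)$, which for a complete fan is standard, but one should confirm that the GIT fan of an abelian GLSM is complete (it is, by the VGIT description of the secondary/GKZ fan). A secondary, more cosmetic point is that Theorem~\ref{thm: BFK wall crossing} is phrased for a pair of adjacent chambers with a specific normalization of $\lambda_i$; one must check that the hypothesis $\theta_K \circ \lambda_i = 1$ is symmetric in the choice of positive side, which it is because it depends only on the wall $W_i$ containing $\theta_K$ and not on which adjacent chamber is called $\sigma_+$.
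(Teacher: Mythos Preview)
Your argument is essentially the same as the paper's: reduce to a sequence of wall crossings, each through a wall containing $\theta_K$, and invoke part (i) of Theorem~\ref{thm: BFK wall crossing} at every step. The paper phrases the connectivity step topologically rather than combinatorially: it takes a small ball $B$ around $\theta_K$, picks interior points $\theta_{K+\epsilon}\in\sigma_K$ and $\theta_{K+\epsilon'}\in\sigma_K'$ inside $B$, and runs a generic path in $B$ avoiding all cones of codimension $\geq 2$. Since $B$ is small, every wall the path meets contains $\theta_K$, and one finishes exactly as you do.

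One caution about your version: the parenthetical justification that the GIT fan is complete ``being a refinement of the fan of $\widehat{\ker\chi}_{\Q}$ by the hyperplanes defined by weights of $V$'' is not correct---the GKZ/secondary fan is not in general a hyperplane arrangement, and its support is the cone generated by the weights, which need not be all of $\widehat{\ker\chi}_{\R}$. What you actually need is only that the maximal cones in $\operatorname{Star}(\tau)$ are connected through their shared walls, and this holds for any polyhedral fan because the support is convex (so a small neighborhood of $\theta_K$ intersected with the fan is connected after removing the codimension $\geq 2$ skeleton). The paper's ball-and-path formulation makes exactly this local convexity argument without invoking global completeness, which is why it is slightly cleaner here.
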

\begin{proof}
The proof is the same for the Kuznetsov and anti-Kuznetsov category.  We provide the proof for the Kuznetsov category.

Take two chambers $\sigma_K, \sigma_K'$ containing $\theta_K$ in $\widehat{\ker(\chi)}_{\R}$ and consider a sufficiently small ball $B$ around $\theta_K$.  This ball contains a point in all possible Kuznetsov chambers in particular points $\theta_{K+ \epsilon} \in \sigma_K, \theta_{K+\epsilon'} \in \sigma_K'$.   Now choose a path $\gamma$  in $B$ from $\theta_{K+ \epsilon}$ to $\theta_{K+\epsilon'}$ which does not pass through any codimension $m$ cones for $m \geq 2$.  Since $B$ is sufficiently small, every wall that $\gamma$ passes through will contain $\theta_K$.  Every time we cross a wall, we use  Theorem~\ref{thm: BFK wall crossing}(i), which says that the corresponding absolute derived categories are equivalent, as desired.
\end{proof}

\begin{remark}
Corollary~\ref{cor: independence} holds for some non-abelian $G$ under additional hypotheses. For example for a Calabi-Yau GLSM, $\theta_K$ is the origin and $\sigma_K$ is any chamber.  Then, we expect all chambers to be equivalent.  This has been established in the quasi-symmetric case \cite[Corollary 5.2]{HLS} and for certain examples (e.g., \cite{KimoiEd}).  
\end{remark}

We are particularly interested in GLSMs coming from classical algebraic geometry.  These are defined as follows.

\begin{definition}\label{def: geometric}
A GLSM $(V, \Gamma, \chi, \theta, w)$ is called \newterm{geometric} if the following conditions hold:
    \begin{enumerate}
        \item $V\modmod{\theta} \ker \chi$ is the total space of a vector bundle $\mathcal E$ over a space $X$;
        \item $w$ is a pairing with a regular global section $s$ of $\mathcal E^\vee$; and 
        \item $\chi$ splits so that $\Gamma \cong \ker \chi \times \mathbb G_m$ and the $\Gm$ factor acts by fiberwise dilation on $\mathcal E$.
    \end{enumerate}
\end{definition}
 The term geometric comes from high-energy theoretical physics.  It refers to the fact that a geometric GLSM $(V, \Gamma, \chi, \theta, w)$ admits the same conformal field theory as the complete intersection $Z(s) \subseteq X$.  This is realized mathematically by the following result \cite{Isik, Shipman, Hirano}. 

\begin{theorem}[Proposition 4.8 of \cite{Hirano}] \label{thm: IsikShipmanHirano}
If $(V, \Gamma, \chi, \theta, w)$ is geometric, then there is an equivalence of categories
\[
\dbcoh{Z(s)} \cong  \dabs(V_\theta, \Gamma, w).
\]
\end{theorem}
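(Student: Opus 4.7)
The plan is to construct an explicit Fourier-Mukai-type equivalence from $\dbcoh{Z(s)}$ to $\dabs(V_\theta, \Gamma, w)$ via the Koszul factorization attached to the section $s$. Write $T := [V_\theta / \ker\chi]$, which by hypothesis is the total space of $\mathcal E$ over $X$, with projection $\pi \colon T \to X$ and zero section $\sigma \colon X \to T$; the superpotential $w$ becomes the pairing of the tautological fiber coordinate of $\mathcal E$ with $\pi^* s$, hence is linear along fibers. The key object is the Koszul complex $\Lambda^\bullet \pi^* \mathcal E$, equipped with two differentials given by contraction with the fiber coordinate and with $\pi^* s$; this assembles into a $\mathbb G_m$-equivariant matrix factorization of $w$ which restricts along $\sigma$ to a resolution of $\mathcal O_{Z(s)}$. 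Geometrically, the critical locus of $w$ along the fibers coincides with $Z(s) \subset X$ sitting inside $T$ via $\sigma$, which is what makes the equivalence plausible.

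First, I would use the splitting $\Gamma \cong \ker\chi \times \mathbb G_m$ to reinterpret $\dabs(V_\theta, \Gamma, w)$ as the category of $\mathbb G_m$-equivariant matrix factorizations of $w$ on the stacky total space $T$. Next, I would define a functor $\Phi \colon \dbcoh{Z(s)} \to \dabs(V_\theta, \Gamma, w)$ by sending $\mathcal F$ to the Koszul factorization tensored with $\pi^* i_* \mathcal F$, where $i \colon Z(s) \hookrightarrow X$ is the inclusion. As a candidate quasi-inverse $\Psi$, one can restrict a matrix factorization along $\sigma$ and extract the appropriate $\mathbb G_m$-weight pieces; conceptually, outside $Z(s)$ the fiberwise restriction of $w$ is a nondegenerate linear form, so every matrix factorization becomes contractible there, which localizes the construction to $Z(s)$. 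An alternative route is to pass through the graded singularity category of $w^{-1}(0) \subset T$ via Orlov's theorem and then apply a Kn\"orrer-type periodicity argument along the fibers.

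The verification that $\Phi$ is an equivalence breaks into: (i) fully faithfulness on a set of generators such as structure sheaves pulled back from $X$, via a projection-formula computation that uses $\pi$ being affine together with the fact that the Koszul complex resolves $\sigma_*\mathcal O_X$; and (ii) essential surjectivity by checking that the image hits a generating set of $\dabs(V_\theta, \Gamma, w)$, for example pullbacks of coherent sheaves from $X$ with varying $\mathbb G_m$-weights. The main obstacle is handling the equivariance with respect to the full group $\Gamma$ rather than $\mathbb G_m$ alone; one must verify that the Koszul factorization genuinely descends as a $\Gamma$-equivariant object on $[V_\theta/\ker\chi]$, and that $\Phi$ and $\Psi$ respect this structure. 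A secondary subtlety is pinpointing where the regularity of $s$ enters: it is needed precisely to ensure that $Z(s)$ has the expected codimension and that the Koszul complex is exact, without which $\Phi$ would only give a semi-orthogonal embedding rather than an equivalence.
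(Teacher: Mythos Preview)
The paper does not supply a proof of this statement: it is quoted as a known result (Isik, Shipman, Hirano) and used as a black box, so there is no ``paper's own proof'' to compare against. What you have written is a reasonable outline of the arguments in those references rather than something to be checked against this paper.

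That said, your sketch is broadly accurate. The Koszul-factorization functor you describe is essentially Shipman's construction, and the alternative route through the graded singularity category is Isik's. Two small corrections: the regularity of $s$ is used not only for the Koszul resolution of $\mathcal O_{Z(s)}$ but also to ensure that the critical locus of $w$ on $T$ is exactly $\sigma(Z(s))$, which is what makes the factorization category ``see'' only $Z(s)$; and your candidate $\Psi$ is not quite right as stated, since restricting along $\sigma$ lands in factorizations of $w|_{\sigma(X)}=0$ rather than in complexes on $Z(s)$---one needs an additional step (e.g.\ taking cohomology with respect to the Koszul differential, or equivalently passing through the relative singularity category) to land in $\dbcoh{Z(s)}$. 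Hirano's contribution, which is the version actually cited here, is to carry this through in the equivariant/stacky generality needed for GLSMs, addressing exactly the $\Gamma$-equivariance issue you flag.
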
 

By comparing certain algebraic varieties and GLSMs, we obtain Kuznetsov components of derived categories of certain algebraic varieties as Kuznetsov categories of GLSMs.
We give a simple motivating example.
\begin{example}\label{standard example}
Consider the GLSM $(V, \Gamma, \chi, \theta, w)$ where $V = \C^{n+1}$ and $\Gamma =  \mathbb G_m \times \mathbb G_m$ acts with weights $1, \dots,  1, -d$ for the first factor and weights $0,\dots, 0, 1$ for the second factor. Let $\chi = \pi_2$ and $\theta= \pi_1$. Let  $w=x_{n+1}s(x_1, \dots, x_n)$, where $s$ is a homogeneous polynomial of degree $d$, so $w$ is homogeneous of degree zero. 

This GLSM is geometric, corresponding to a hypersurface $Z(s)$ of degree $d$ in $\mathbb P^{n-1}$. By Theorem~\ref{thm: IsikShipmanHirano}, we have the equivalence $\dbcoh{Z(s)} \cong  \dabs(V_\theta, \Gamma, w)$. 

We compute that $\theta_K = \pi_1^{-n+d-1}$. Depending on $n$ and $d$, the character $\theta_K$ can lie in any of the three cones in the GIT fan, with the positive chamber corresponding to the original geometric phase.  These three cases correspond exactly to the semi-orthogonal decompositions described by Orlov \cite{Orlov}.  Namely, by Theorem~\ref{thm: BFK wall crossing}, there is a semi-orthogonal decomposition
        $$
 -   \mathcal K \cong \langle \mathcal{W}_1, \dots, \mathcal{W}_{|n+1-d|}, \mathcal K\rangle, 
    $$
    where $\mathcal{W}_i \cong \dabs(\op{Spec \C}, \gm, 0) \cong \dbcoh{ \op{Spec \C}} \cong \op{D^b(\mathbb C\op{-vect})}$ and:
\begin{enumerate}
    \item If $n+1 = d$, then $\theta_K$ is on the wall so $$\mathcal K \cong -\mathcal K \cong \dabs (V_{\theta}, \Gamma, w)\cong \dabs (V_{\theta^{-1}}, \Gamma, w);$$
    \item If $n+1>d$, then $\theta_K$  is in the positive chamber so $$\mathcal K \cong \dabs (V_{\theta^{-1}}, \Gamma, w), \qquad -\mathcal K \cong   \dbcoh{ Z(s)} \cong \dabs (V_\theta, \Gamma, w);$$
    \item If $n+1 < d$, then $\theta_K$ is in the negative chamber so $$\mathcal K \cong \dabs (V_{\theta}, \Gamma, w), \qquad -\mathcal K \cong \dabs (V_{\theta^{-1}}, \Gamma, w) \cong \dbcoh{ Z(s)}.$$
\end{enumerate}

The explicit matching of the GLSM story with Orlov's result was originally examined by Segal \cite{Segal} with follow-up works by Shipman \cite{Shipman} and Ballard-Favero-Katzarkov \cite{VGIT}. In the case where $n+1 > d$, Kuznetsov \cite[Corollary 4.4]{Kuz05} shows that $\mathcal{K}$ is a fractional Calabi-Yau of dimension $(n+1)(d-2)/d$. This also follows from Orlov's description \cite{Orlov} which also shows that if $n+1 < d$ the category $-\mathcal K$ is fractional Calabi-Yau.  Furthermore, if $d$ divides $n+1$, or $d$ is even and $d/2$ divides $n+1$, then $\mathcal{K}$ is Calabi-Yau. We remark that $\mathcal K$ in this example is often referred to as the Kuznetsov or residual component of $\dbcoh{Z(s)}$.
\end{example}

 \section{Kuznetsov Categories for complete intersections in toric varieties}\label{sec:toric complete intersections}

Let $V$ be an $n$-dimensional $\mathbb C$-vector space and assume that $G$ is abelian, or, equivalently, $G \subseteq (\C^*)^{\n} \subseteq \text{GL}(V)$.
In this case, the combinatorics of the GIT fan is completely explicit \cite{GKZ}.  In light of this work, it is most often referred to as the GKZ fan. 

Here, every GIT quotient $V\modmod{\theta} G$ with $\theta$ lying in the interior of the support of the GKZ fan corresponds to a toric variety  $X_\Sigma$. Using the inclusion $\iota: G \hookrightarrow (\C^*)^{\n}$, we obtain an exact sequence 
\begin{equation}\label{ses: GIT}
0 \to M \xrightarrow{\nu} \widehat{(\C^*)^{\n}} \xrightarrow{\hat\iota} \widehat G \to 0
\end{equation}
where $M$ is the kernel of $\hat \iota$.  The dual map $\nu^*: \mathbb Z^{\n} \to \Hom(M, \mathbb Z)$ gives a collection of lattice points
\[
\boldsymbol{\nu} :=\{ \nu^*(e_i) \} \subset \Hom(M, \mathbb Z).
\]

The GIT quotients which occur are precisely those corresponding to semiprojective toric varieties $X_\Sigma$ with $|\Sigma| = \text{Cone}(\boldsymbol{\nu})$ and $\Sigma(1) \subseteq \boldsymbol{\nu}$.  In our notation this means, $V_{\theta} = \A^{\Sigma(1)} \setminus Z(B(\Sigma)) \times (\mathbb C^*)^{\nu \setminus \Sigma(1) }$ where $B(\Sigma)$ is the irrelevant ideal for $\Sigma$.

Now, suppose $X$ is a projective toric variety. Then  we can write $X= V \modmod{\theta} G$ for some $G \subseteq (\C^*)^{\n}$ and a character $\theta: G \to \Gm$.  In fact, 
\[
G = \Hom(\op{Cl}(X), \mathbb G_m)
\]
is the character group of the class group of $X$ and the character $\theta \in \widehat G = \op{Cl}(X)$ is ample. We denote by $\theta_{D}$ the character in $\widehat{G}$ when we regard a divisor $D \in \op{Cl}(X)$ as a character.

Now consider a complete intersection in $X$ defined by the common zero locus of global sections $s_i \in \Gamma(X, \mathcal O_X(D_i))$ for $1 \le i \le r$.  
Take the vector bundle $\mathcal E = \bigoplus_{i=1}^r \O_{X}(-D_i)$.  We denote the total space by $\tot(\mathcal E) := \op{Spec} \op{Sym} \mathcal E^\vee = \op{tot} \bigoplus_{i=1}^r \O_{X}(-D_i)$.  To ensure that $\tot(\mathcal E)$ is also a GIT quotient, we assume the divisors $D_i$ are nef (see e.g.\ \cite[Lemma 5.19]{FK17} following \cite[Chapters 6 and 7]{CLS}).

This allows us to construct a geometric abelian GLSM. Namely,  we take $\Gamma = G \times \mathbb G_m$ and consider the $\Gamma$-action on $V \times \mathbb C^r$ given by 
\begin{equation}\label{eqn: G action on bundle}
(g, h) \cdot(v, t_1, ..., t_r) = (g \cdot v,  \theta_{-D_1}(g)ht_1, ..., \theta_{-D_r}(g)ht_r).
\end{equation}
Then, $V \times \mathbb C^r \modmod{\theta} G = \tot(\mathcal E)$ and the additional $\mathbb G_m$ acts by fiberwise dilation.  The action restricted to $G$ in \eqref{eqn: G action on bundle} provides us with an inclusion $\iota': G \to (\C^*)^{\n + r}$. We define the map $\nu': M\oplus \Z^r \to \widehat{(\C^*)^{\n+r}}$ to obtain an analogous short exact sequence to ~\eqref{ses: GIT}.



The dilation action scaling the fibers of the total space extends to the coordinate ring.  Namely, write $u_1, \dots, u_r$ for the coordinates of $\mathbb C^r$. The global function 
$$
w = \sum_i u_if_i: U_{\Sigma} \times \A^r \to \A^1
$$
is then $\chi$-semi-invariant, where $\chi : \Gamma \to \Gm$ is the character defined by the projection to the second factor. This gives the GLSM 
$$
(V \times \mathbb C^r, \Gamma, \chi, \theta, w)
$$
in the notation of Definition~\ref{defn:glsm}.

For convenience, we provide a description of the fan for $\tot(\mathcal E)$ here.  We denote the fan for $\tot(\mathcal E)$ by $\Sigma^{\mathcal{E}}$.
We recall the description of $\Sigma^{\mathcal{E}}$ (from, e.g., \cite[\textsection 7.3.1]{CLS}), built from the fan $\Sigma^X$ for $X$.  The fan $\Sigma^{\mathcal{E}}$ is contained in $N_{\R} \otimes \R^r$. Write $e_i$ for the standard basis of $\R^r$ and $\tau_i$ for the ray generated by $e_i$, and express $D_i = \sum_{\rho\in \Sigma^X\!(1)} a_{i\rho}D_\rho$ as a sum of torus-invariant Weil divisors. From the original rays of $\Sigma^X$, we obtain new rays $\bar \rho := \op{Cone}(u_\rho + \sum_{i=1}^r a_{i\rho}e_i)$ for all $\rho \in \Sigma^X(1)$. The rays of $\Sigma^{\mathcal{E}}$ are then the set 
\[
\boldsymbol{\nu}' := \Sigma^{\mathcal E}(1) = \{ \bar \rho \ | \ \rho \in \Sigma^X(1)\} \cup \{\tau_1, \dots, \tau_r\}.
\]
The cones in the fan $\Sigma^{\mathcal{E}}$ consist of cones of the form
$$
\sigma_{D, I} := \op{Cone}(\bar \rho, \tau_i \ | \ \rho \in \sigma(1), i \in I)
$$
for any $\sigma \in \Sigma^X$, $I \subseteq \{1, \dots, r\}$. 

\begin{remark}
To convert between the GIT/GLSM descriptions and the toric ones, observe that in a basis the matrix $\nu$ (resp.\ $\nu'$) has rows given by the elements of $\boldsymbol{\nu}$ (resp.\ $\boldsymbol{\nu}'$).
\end{remark}

Consider the GIT problem with the group $G$ acting on the vector space $V \times \C^r$ as above.
Then \cite[Lemma 4.14]{FK18} implies that the following diagram has exact rows and commutes
\begin{equation}
\xymatrix@C=1.3em{
0 \ar[rr] && M\oplus\Z^r \ar[rr]^-{\nu'} \ar[d]_{\text{proj}_M} && \Z^{\n+r} \ar[rr]^-{\hat \iota'} \ar[d]_g &&  \widehat{G} \ar[d]_{\mathrm{id}} \ar[rr] && 0\\
0 \ar[rr] && M  \ar[rr]^-{\nu}  &&  \Z^{\n} \ar[rr]^-{\hat{\iota}} && 
\widehat{G}  \ar[rr] && 0,
}
\label{eq: projection diagram}
\end{equation}
where $\op{proj}_M: M\oplus\Z^r \rightarrow M$ is the standard projection and $g$ is defined by
\begin{equation} \label{eqdef: g}
g: \Z^{\n+r} \rightarrow \Z^{\n},  \ \ e_\rho \mapsto e_\rho, \ \  e_i \mapsto -\sum_\rho a_{i\rho} e_\rho.
\end{equation}
By chasing the diagram, one sees that $\hat\iota(e_i) = -\sum_{\rho} a_{i\rho} D_\rho$. 

Each row of \eqref{eq: projection diagram} gives rise to a toric GIT problem. We denote the corresponding GKZ fans for each short exact sequence by $\Sigma^{\mathcal E}_{\op{GKZ}}$ and $\Sigma^X_{\op{GKZ}}$. Both are fans in $\widehat G_{\R}$. By \cite[Theorem 14.4.7]{CLS}, the support of these fans are as follows:
\begin{align*}
    |\Sigma^{\mathcal E}_{\op{GKZ}}| &= \op{Cone}(\hat \iota'(e_i), \hat \iota'(e_\rho) \ | \ i\in \{1, \dots, r\}, \rho \in \Sigma(1)); \\ |\Sigma^X_{\op{GKZ}}| &= \op{Cone}(\hat\iota(e_\rho) \ | \ \rho \in \Sigma(1)).
\end{align*}
Thus $|\Sigma^X_{\op{GKZ}}| \subseteq  |\Sigma^{\mathcal E}_{\op{GKZ}}|$, by the commutativity of \eqref{eq: projection diagram} and the fact that $g(e_\rho) = e_\rho$. Moreover, since $|\Sigma^X|=\R^n$, Gale duality between ray generators and torus-invariant divisor classes implies that the cone $|\Sigma^X_{\op{GKZ}}|$ is strongly convex, see \cite[Lemma 14.3.2]{CLS}. 

On the other hand, if any of the $D_i$ lie in the interior of the effective cone (e.g., some $D_i$ is ample) then $|\Sigma^{\mathcal E}_{\op{GKZ}}| = \widehat G_{\mathbb R}$ since this is the cone generated by a full dimensional strongly convex cone $|\Sigma^X_{\op{GKZ}}|$ and at least one vector is in the negative of the interior.

Let $W$ be a wall in $\Sigma^{\mathcal E}_{\op{GKZ}}$ and let $V_W$ denote its linear span in $\widehat G_{\mathbb R}$. Define
\begin{align*}
\boldsymbol{\beta}^X_W &:= \{\beta_\rho:=\hat\iota(e_\rho) \ | \ \rho \in \Sigma_{\op{GKZ}}(1), \pi(e_\rho) \in V_W \}; \\
\boldsymbol{\beta}^{\mathcal{E}}_W &:= \boldsymbol{\beta}^X_W\cup  \{\beta_i:= \hat\iota'(e_i) \ | \ i \in \{1, \dots, r\}, \pi(e_i) \in V_W\};
\end{align*}
and consider the following cones in $\widehat G_{\mathbb R}$
\begin{align*}
C_W^X & := \op{Cone}(\boldsymbol{\beta}^X_W); \\
C_W^{\mathcal{E}} & := \op{Cone} (\boldsymbol{\beta}^{\mathcal{E}}_W ). 
\end{align*}
\noindent The cone $C^X_W$ is a subcone of the strongly convex cone $|\Sigma^X_{\op{GKZ}}|$ hence is also strongly convex. We note that $C^X_W$ need not be a cone in $\Sigma^X_{\op{GKZ}}$.

\begin{lemma} \label{prop: ample cone}
    The cone $C_W^X$ and the ample cone $\Amp(X)$ of $X$ are disjoint.
 \end{lemma}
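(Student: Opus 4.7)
The plan is to argue by contradiction. Suppose $\theta \in C_W^X \cap \Amp(X)$. Choose a nonzero cocharacter $\lambda_W \in \op{Hom}(\gm, G)\otimes_\Z \R$ whose orthogonal hyperplane in $\widehat{G}_\R$ equals $V_W$, i.e.\ $V_W = \{\beta : \langle \beta, \lambda_W\rangle = 0\}$. Set $S_0 := \{\rho \in \Sigma^X(1) : \langle \beta_\rho, \lambda_W\rangle = 0\}$, so that $C_W^X = \Cone(\beta_\rho : \rho \in S_0)$ and $\langle \theta, \lambda_W\rangle = 0$.

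The first step extracts a maximal cone of $\Sigma^X$ compatible with $S_0$. Since $\theta \in \Cone(\beta_\rho : \rho \in S_0)$, the Hilbert--Mumford numerical criterion implies that the point $x \in \A^{\Sigma^X(1)}$ with $x_\rho \neq 0$ exactly for $\rho \in S_0$ is $\theta$-semistable for the $G$-action. Because $\theta \in \Amp(X)$, the $\theta$-semistable locus of $\A^{\Sigma^X(1)}$ coincides with the Cox open set of $X$, so $\{\rho : x_\rho = 0\} = \Sigma^X(1) \setminus S_0 \subseteq \sigma(1)$ for some maximal cone $\sigma \in \Sigma^X$; equivalently, every ray on which $\lambda_W$ acts nontrivially lies in $\sigma(1)$.

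The second step uses ampleness together with a small perturbation. By the standard characterization $\Amp(X) = \bigcap_\tau \op{int}\Cone(\beta_\rho : \rho \notin \tau(1))$ valid for simplicial projective $X$, specialized to the cone $\sigma$ just extracted, $\theta$ lies in the interior of $\Cone(\beta_\rho : \rho \notin \sigma(1))$. Pick $\epsilon \in \widehat{G}_\R$ with $\langle \epsilon, \lambda_W\rangle > 0$, of sufficiently small norm that $\theta + \epsilon$ remains in that interior, and write $\theta+\epsilon = \sum_{\rho \notin \sigma(1)} b_\rho \beta_\rho$ with $b_\rho \geq 0$. Pairing with $\lambda_W$ and using $\langle \theta, \lambda_W\rangle = 0$ yields
\[
0 \;<\; \langle \epsilon, \lambda_W\rangle \;=\; \sum_{\rho \notin \sigma(1)} b_\rho \, \langle \beta_\rho, \lambda_W\rangle,
\]
so some $\rho_0 \notin \sigma(1)$ must have $\langle \beta_{\rho_0}, \lambda_W\rangle \neq 0$. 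But then $\rho_0 \in \Sigma^X(1) \setminus S_0 \subseteq \sigma(1)$, contradicting $\rho_0 \notin \sigma(1)$.

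The main things to check are the two classical toric inputs: the identification, for $\theta \in \Amp(X)$, of the $\theta$-semistable locus in $\A^{\Sigma^X(1)}$ with the Cox open set of $X$ (which converts the Hilbert--Mumford criterion into the combinatorial statement about some maximal cone), and the interior description $\Amp(X) = \bigcap_\tau \op{int}\Cone(\beta_\rho : \rho \notin \tau(1))$. Both are classical for the smooth (hence simplicial) projective toric varieties and Deligne--Mumford stacks considered in the paper.
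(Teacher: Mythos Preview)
Your argument is correct. Both your proof and the paper's proceed by contradiction and ultimately rest on the same toric input, namely that for a simplicial projective $X$ any basis $\{\beta_\rho\}_{\rho\in I}$ drawn from the torus--invariant divisor classes satisfies $\Nef(X)\subseteq\Cone(\beta_\rho:\rho\in I)$ and hence $\Amp(X)\subseteq\op{int}\Cone(\beta_\rho:\rho\in I)$ (\cite[Proposition~15.2.1]{CLS}). The difference is in how the relevant basis is produced.

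The paper takes a more direct route: from $\theta\in C_W^X$ it uses Carath\'eodory to place $\theta$ in a simplicial subcone $C_J$ with $J$ linearly independent and contained in the hyperplane $V_W$, then extends $J$ to a $\beta$-basis $I$. Since $C_J$ is a proper face of the full-dimensional simplicial cone $C_\beta=\Cone(\beta_\rho:\rho\in I)$, the point $\theta$ sits on the boundary of $C_\beta$, contradicting $\theta\in\Amp(X)\subseteq\op{int}C_\beta$. No GIT or perturbation is needed.

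Your route instead manufactures the basis geometrically: you invoke the description of the $\theta$-semistable locus for $\theta\in\Amp(X)$ as the Cox open set to find a maximal cone $\sigma$ with $\Sigma^X(1)\setminus S_0\subseteq\sigma(1)$, then use the complementary basis $\{\beta_\rho:\rho\notin\sigma(1)\}$. This works, but note that once you have $\{\rho\notin\sigma(1)\}\subseteq S_0$ you are already done without any perturbation: the full-dimensional cone $\Cone(\beta_\rho:\rho\notin\sigma(1))$ would then lie inside the hyperplane $V_W$, which is impossible. Your $\epsilon$-argument is a slightly longer way of saying the same thing. Either way, the paper's argument is shorter because it avoids the detour through Hilbert--Mumford and the Cox construction, while yours makes the link to the GIT picture underlying the whole section more explicit.
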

 
 \begin{proof}
     Suppose that $C^X_W \cap \Amp(X) \ne \emptyset$. 
    Then, there exists a linearly independent set $J \subset \{ \rho \in \Sigma^X_{\op{GKZ}}(1), D_\rho \in V_W\}$ so that the cone $C_J := \op{Cone}(\beta_\rho \ | \ \rho \in J)$ contains a point $\theta$ in the ample cone.  Since this cone is not full dimensional, we extend $\{\beta_\rho\}_{\rho \in J}$ to a $\beta$-basis $\{\beta_\rho\}_{\rho \in I}$ and consider the cone $C_\beta := \op{Cone}(\{\beta_\rho\}_{\rho \in I})$. 

    Since $ \op{Nef}(X) \subseteq C_\beta$ by \cite[Proposition 15.2.1]{CLS} and both are full-dimensional cones, we have that  $\op{Amp}(X) = \op{int}\op{Nef}(X) \subseteq \op{int} C_\beta$. However, $\theta$ is on a face of the cone $C_\beta$ as it is in the proper face $C_J$ of the cone $C_\beta$.
 \end{proof}

\begin{proposition}\label{prop:strongly convex weight cone}
Suppose that the divisors $D_i$ are ample for all $i$. For any wall $W \in \Sigma^{\mathcal E}_{\op{GKZ}}$, the cone $C^{\mathcal E}_W$ is strongly convex.
\end{proposition}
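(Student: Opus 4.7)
My plan is to argue by contradiction, reducing to Lemma~\ref{prop: ample cone} by extracting an ample class forced to lie in $C^X_W$. Suppose that $C^{\mathcal E}_W$ is not strongly convex; then there is a nonzero $v \in \widehat G_{\mathbb R}$ with both $v$ and $-v$ in $C^{\mathcal E}_W$. The first task is to understand the extra generators of $C^{\mathcal E}_W$ beyond those of $C^X_W$.

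The extra generators have the form $\beta_i = \hat\iota'(e_i)$ for the indices $i \in \{1,\dots,r\}$ with $\pi(e_i) \in V_W$. Chasing diagram~\eqref{eq: projection diagram} using the formula for $g$ from~\eqref{eqdef: g}, I get $\beta_i = \hat\iota(g(e_i)) = -\sum_{\rho} a_{i\rho} D_\rho = -D_i$. Under the ampleness hypothesis, each such extra generator is therefore an \emph{anti-ample} class, while the generators coming from $\boldsymbol{\beta}^X_W$ are the torus-invariant divisor classes $D_\rho$ defining $C^X_W$.

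Next, I would write $v$ and $-v$ as nonnegative combinations of these generators and add them; after moving the anti-ample contributions to the right-hand side, this yields an equality
\[
\sum_{\rho} c_\rho D_\rho \;=\; \sum_{i} d_i D_i
\]
in $\widehat G_{\mathbb R}$ with $c_\rho, d_i \ge 0$, where the left-hand side lies in $C^X_W$. If some $d_i > 0$, then because a positive combination of ample classes is ample, the right-hand side lies in $\Amp(X)$, contradicting Lemma~\ref{prop: ample cone}. Hence all $d_i = 0$, and both $v$ and $-v$ lie in $C^X_W$. Since $C^X_W$ is strongly convex, being a subcone of the strongly convex cone $|\Sigma^X_{\op{GKZ}}|$, this forces $v = 0$, a contradiction.

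The step I expect to require the most care is the identification $\beta_i = -D_i$ via the diagram chase, since it is precisely this translation that converts the ampleness of the $D_i$ into the positivity needed to invoke Lemma~\ref{prop: ample cone}. The remainder of the argument is essentially bookkeeping with conical combinations in the GKZ fan.
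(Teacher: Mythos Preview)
Your proof is correct. Both your argument and the paper's rely on the same three ingredients: the identification $\beta_i=-D_i$ via the diagram chase, Lemma~\ref{prop: ample cone} (disjointness of $C^X_W$ and $\Amp(X)$), and the strong convexity of $C^X_W$. The difference is in packaging. The paper invokes the hyperplane separation theorem to produce a hyperplane $H$ with $C^X_W$ and $-\Amp(X)$ on the same (closed) side, observes that therefore all of $C^{\mathcal E}_W$ lies on that side, and then argues that $C^{\mathcal E}_W\cap(-C^{\mathcal E}_W)\subseteq H\cap C^{\mathcal E}_W=H\cap C^X_W$ (since each $-D_i$ is strictly positive for $H$), reducing to the strong convexity of $C^X_W$. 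You instead write $v$ and $-v$ explicitly as nonnegative combinations of generators, add them, and read off directly that either an ample class lands in $C^X_W$ (contradicting the lemma) or the $\beta_i$-coefficients all vanish. Your route is slightly more elementary in that it sidesteps the separation theorem entirely; the paper's route is a bit more geometric and makes the role of the hyperplane $V_W$ visible. Either way the substance is the same.
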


\begin{proof}
By Lemma~\ref{prop: ample cone} and the hyperplane separation theorem for convex sets (see e.g. \cite[3.4]{Klee}), there exists a hyperplane\footnote{Technically, the result cited only gives the existence of an affine hyperplane. This hyperplane must pass through the origin since both convex sets contain the origin in their closure.}
$H$ that separates $C^X_W$ and $\Amp(X)$. So $C^X_W$ and $-\Amp(X)$ lie on the same side of $H$, which we choose to be the positive side of $H$. Using the commutativity of \eqref{eq: projection diagram}, we see that $\beta_i \in - \Amp(X)$, hence $C^{\mathcal E}_W = \Cone(C^X_W, \boldsymbol{\beta}^{\mathcal{E}}_W \setminus \boldsymbol{\beta}^X_W)$ in $V$ lies on the positive side of $H$. 

Now, notice $C^{\mathcal E}_W \cap -C^{\mathcal E}_W$ is contained in $H$. But $H \cap C^{\mathcal E}_W = H \cap C^X_W$, as each $-D_i$ is strictly positive for $H$. Since $C^X_W$ is a strongly convex cone, we have 
\begin{align*}
C^{\mathcal E}_W \cap -C^{\mathcal E}_W &\subseteq H \cap C^{\mathcal E}_W \cap -C^{\mathcal E}_W \\
& = (H \cap C^{\mathcal E}_W) \cap (H \cap -C^{\mathcal E}_W) \\
& = (H \cap C^X_W) \cap (H \cap - C^X_W) \\ 
& = \{0\}.
\end{align*}
\end{proof}


\begin{lemma} \label{lem: w=0}
If the divisors $D_i$ are ample for all $i$, then the function $w|_{(V \times \mathbb C^r)^\lambda}=0$. 
\end{lemma}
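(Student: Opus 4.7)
The plan is to show that every monomial of $w$ restricts to $0$ on the $\lambda$-fixed locus. Writing $w = \sum_{i=1}^{r} u_i f_i$ and expanding each $f_i$ as a polynomial in the coordinates $x_\rho$ for $\rho \in \Sigma^X(1)$, it suffices to rule out nonzero restrictions of monomials of the form $u_i \prod_\rho x_\rho^{c_\rho}$ to $(V \times \C^r)^\lambda$.

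The one-parameter subgroup $\lambda$ is primitive in $\ker \chi$ and perpendicular to the linear span $V_W$ of the wall, so a coordinate has trivial $\lambda$-weight exactly when its $G$-weight lies in $V_W$. From \eqref{eqn: G action on bundle}, the $G$-weight of $x_\rho$ is $\beta_\rho$, while the $G$-weight of $u_i$ is $-D_i$; hence $(V \times \C^r)^\lambda$ is cut out by the vanishing of every coordinate whose $G$-weight lies outside $V_W$. For a monomial $u_i \prod_\rho x_\rho^{c_\rho}$ to have a nonzero restriction to the fixed locus we must therefore have $-D_i \in V_W$ and $\beta_\rho \in V_W$ for every $\rho$ with $c_\rho > 0$. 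The $G$-invariance of $u_i f_i$ (i.e.\ total $G$-weight equal to $0$) forces $\sum_\rho c_\rho \beta_\rho = D_i$, placing $D_i$ in $C^X_W = \op{Cone}(\boldsymbol{\beta}^X_W)$. Since $D_i$ is ample by hypothesis, this contradicts Lemma~\ref{prop: ample cone}, which asserts that $C^X_W \cap \Amp(X) = \emptyset$.

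The computation itself is routine; the essential non-trivial input is Lemma~\ref{prop: ample cone}, which provides the separation between the cone $C^X_W$ of surviving $G$-weights and the ample cone. The only subtlety I anticipate is the sign check that $\beta_i = -D_i$ from \eqref{eqn: G action on bundle}, guaranteeing that the weight identity places $+D_i$ (rather than $-D_i$) in $C^X_W$, so that ampleness of $D_i$ can indeed be invoked to derive the contradiction.
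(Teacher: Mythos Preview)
Your argument is correct but proceeds along a different route from the paper. The paper argues structurally: the coordinates of $\op{Sym}((V\times\C^r)^\lambda)^\vee$ have $G$-degrees lying in the cone $C^{\mathcal E}_W$, which is strongly convex by Proposition~\ref{prop:strongly convex weight cone}; hence any $G$-degree-zero element of this ring is constant, and since $w$ carries nontrivial $\Gm$-degree it must vanish. You instead expand $w$ into monomials $u_i\prod_\rho x_\rho^{c_\rho}$, observe that survival on the fixed locus forces each $\beta_\rho$ with $c_\rho>0$ into $V_W$, and then use $G$-invariance to deduce $D_i=\sum_\rho c_\rho\beta_\rho\in C^X_W$, contradicting Lemma~\ref{prop: ample cone} directly. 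Your approach is more hands-on and bypasses Proposition~\ref{prop:strongly convex weight cone} (and the hyperplane separation argument behind it) for this lemma, though that proposition is still needed later in Theorem~\ref{thm: main theorem} to conclude completeness of the Gale-dual fan. The paper's approach is cleaner in that it avoids expanding $w$ and treats the degree constraint uniformly, while yours makes the geometric obstruction (ampleness of $D_i$ versus the wall) more transparent.
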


\begin{proof}  The function $w \in S = \op{Sym} (V \times \mathbb C^r)^\vee$ has degree $(0,1)\in \widehat{ G \times \Gm}$.  Its restriction to $(V \times \mathbb C^r)^\lambda$ is an element of degree $(0,1) \in \widehat{G/\lambda \times \Gm}$ in $\op{Sym}  ((V \times \mathbb C^r)^\lambda)^\vee$.  The polynomial ring $\op{Sym} ((V \times \mathbb C^r)^\lambda)^\vee$ has coordinates with degrees lying in $\widehat{G/\lambda} \cap C^\mathcal E_W$.  Thus, since $C^\mathcal E_W$ is strongly convex by Proposition~\ref{prop:strongly convex weight cone}, $w|_{(V \times \mathbb C^r)^\lambda}$ is constant.  However, $w|_{(V \times \mathbb C^r)^\lambda}$ has non-trivial degree in $\widehat \Gm$ and is therefore $0$.
\end{proof}

\begin{theorem}\label{thm: main theorem}
Let $Z$ be a complete intersection of ample divisors in a smooth projective toric variety (or DM stack).  Then, there are semiorthogonal decompositions
\[
\dbcoh{Z} = \langle \mathcal K, E_1, \dots, E_t \rangle
\]
and
\[
-\mathcal K = \langle \dbcoh{Z}, E_1, \dots, E_s \rangle\]
where the $E_i$ are exceptional objects.
\end{theorem}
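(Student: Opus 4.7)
The plan is to traverse the GIT fan from the geometric chamber containing $\theta$ to a chamber containing $\theta_K$ (resp.\ $\theta_{-K}$), accumulating semi-orthogonal components via Theorem~\ref{thm: BFK wall crossing}. First, Theorem~\ref{thm: IsikShipmanHirano} identifies $\dbcoh{Z} \cong \dabs((V \times \mathbb{C}^r)_\theta, \Gamma, w)$, where $\theta$ sits in the geometric chamber realizing $\tot(\mathcal E)$ as a GIT quotient. I would then fix a generic piecewise-linear path in $\widehat{\ker\chi}_{\mathbb{R}}$ from $\theta$ to an interior point $\theta_{K+\epsilon}$ of a Kuznetsov chamber, crossing only codimension-one walls. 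At each such wall, orient the primitive one-parameter subgroup $\lambda$ so that the chamber nearer $\theta_{K+\epsilon}$ is the positive side; since $\theta_K$ lies on the same side, we have $\theta_K \circ \lambda(t) = t^r$ with $r \geq 0$. When $r = 0$ the wall crossing is an equivalence by Theorem~\ref{thm: BFK wall crossing}(i); otherwise, Theorem~\ref{thm: BFK wall crossing}(ii) places $r$ new components $\mathcal W_1, \ldots, \mathcal W_r$ to the \emph{left} of the nearer-to-Kuznetsov category. Iterating and composing the decompositions across the path yields
\[
\dbcoh{Z} = \langle \mathcal W_1^{(1)}, \ldots, \mathcal W_{r_N}^{(N)}, \mathcal K\rangle.
\]
The anti-Kuznetsov case is symmetric: along a path from $\theta$ to $\theta_{-K-\epsilon}$, the character $\theta_K$ now lies \emph{behind} us at each wall, so after reorienting $\lambda$ the theorem places $\dbcoh{Z}$ on the right of each subsequent decomposition, producing $-\mathcal K = \langle \mathcal W_1', \ldots, \mathcal W_s', \dbcoh{Z}\rangle$.

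To identify each wall-crossing component $\mathcal W_j \cong \dabs\big((V\times\mathbb{C}^r)^\lambda_{\theta_W}, \Gamma/\lambda, w|_{(V\times\mathbb{C}^r)^\lambda}\big)$ with exceptional objects, one invokes Proposition~\ref{prop:strongly convex weight cone}, which crucially uses the ampleness of each $D_i$, to conclude that the weight cone $C^{\mathcal E}_W$ of the fixed locus is strongly convex. Lemma~\ref{lem: w=0} then gives $w|_{(V\times\mathbb{C}^r)^\lambda} = 0$, so
\[
\mathcal W_j \cong \dabs\big((V\times\mathbb{C}^r)^\lambda_{\theta_W}, \Gamma/\lambda, 0\big) \cong \dbcoh{\big[(V\times\mathbb{C}^r)^\lambda_{\theta_W}/(\ker\chi/\lambda)\big]},
\]
where the second equivalence uses the splitting $\Gamma = \ker\chi \times \mathbb G_m$, with the $\mathbb G_m$ factor providing the homological grading of the factorization category with trivial potential. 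Strong convexity of $C^{\mathcal E}_W$ further ensures the resulting toric DM stack is smooth and projective, so it carries a full exceptional collection.

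The main obstacle I expect lies in this last step: guaranteeing that the full exceptional collection on each $\mathcal W_j$ consists of \emph{exceptional objects} rather than merely nontrivial exceptional subcategories. A clean resolution is induction on the dimension of the torus and the codimension of the fixed locus: the data $((V\times\mathbb{C}^r)^\lambda, \ker\chi/\lambda, \chi, \theta_W, 0)$ defines a strictly smaller abelian GIT problem with trivial superpotential and strongly convex weight cone, to which the same wall-crossing procedure applies, terminating at the base case $\dbcoh{\mathrm{pt}}$. Alternatively, one can appeal directly to Kawamata's theorem for smooth projective toric DM stacks. Either way, the bookkeeping step is to verify that the strong-convexity and ampleness hypotheses propagate along the induction and that the descended stability data $\bar\theta_W$ lands in the interior of a chamber of the descended GKZ fan, which follows from the genericity of the chosen path.
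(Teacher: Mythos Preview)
Your proof follows the paper's route almost exactly: traverse the GKZ fan from the geometric chamber toward a Kuznetsov chamber, apply Theorem~\ref{thm: BFK wall crossing} at each wall, then invoke Lemma~\ref{lem: w=0}, Proposition~\ref{prop:strongly convex weight cone}, and Kawamata's theorem to exhibit each $\mathcal W_j$ as the derived category of a smooth projective toric DM stack carrying a full exceptional collection. Two small refinements are worth noting. First, your assertion that ``$\theta_K$ lies on the same side'' as $\theta_{K+\epsilon}$ at every wall is not automatic for a generic piecewise-linear path terminating at an interior point $\theta_{K+\epsilon}$, since the spanning hyperplane $V_W$ of a wall can in principle separate $\theta_K$ from $\theta_{K+\epsilon}$; the paper sidesteps this by taking the straight segment with endpoint $\theta_K$ itself, which forces $\theta_K$ strictly onto the forward side of every wall actually crossed. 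Second, your closing ``obstacle'' is not one: Kawamata's theorem already produces a full exceptional collection of \emph{objects}, so no further induction is needed.
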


\begin{proof}
We give the proof of the first semiorthogonal decomposition.  The proof for the second is exactly the same with $K$ replaced by $-K$.

Choose a generic $\theta \in \Amp(X)$ so that the straight line path $\gamma: [0,1] \to \widehat G_{\mathbb{R}}$ such that $\gamma(0) =\theta$ to $\gamma(1) =\theta_{K}$ and $\gamma([0,1))$ does not intersect any codimension $m$ cones for $m \geq 2$. 
This path crosses through a finite number of walls and, by construction, $\theta_K$ lies on the positive side of the hyperplane spanned by each wall in the direction of the path.  Therefore, by Theorem~\ref{thm: BFK wall crossing}, at each wall-crossing, we pick up some number of copies of $\mathcal W_i$.

Now, 
\begin{align*}
\mathcal W_i & = \dabs((V^\lambda)_{\theta_W}, \Gamma/\lambda, w|_{V^\lambda})  & \\
& = \dabs((V^\lambda)_{\theta_W}, \Gamma/\lambda, 0) )& \text{by Lemma~\ref{lem: w=0}} \\
&= \dbcoh{[(V^\lambda)_{\theta_W} / (\ker \chi /\lambda)]} &\text{by \cite[Proposition 2.1.6]{HPD}}
\end{align*}

The support of the fan for $[(V^\lambda)_{\theta_W} / (\ker \chi /\lambda)]$ is Gale dual to $C_W^{\mathcal E}$, hence fan is complete since Proposition~\ref{prop:strongly convex weight cone}, $C_W^{\mathcal E}$ is strongly convex. It follows that $[(V^\lambda)_{\theta_W} / (\ker \chi /\lambda)]$ is proper. Thus $[(V^\lambda)_{\theta_W} / (\ker \chi /\lambda)]$ is projective since it is a GIT quotient. Moreover, the fan is simplicial since $\theta_W$ is generic for the GIT problem in the wall. Therefore, $[(V^\lambda)_{\theta_W} / (\ker \chi /\lambda)]$ is a smooth projective DM toric stack.

By \cite[Theorem 1.1]{Kawamata} (see also \cite[5.2.3]{VGIT} for this specific class of stacks), the derived category $\dbcoh{[(V^\lambda)_{\theta_W} / (\ker \chi /\lambda)]}$ admits a full exceptional collection. Now we are done as we have shown that every wall crossing contributes some number of exceptional objects.
\end{proof}

\begin{remark}
    The fully-faithful embeddings of $\mathcal K$ into $\dbcoh{Y}$ and $\dbcoh{Y}$ into $-\mathcal K$ depend on the choices made to produce the semi-orthogonal decompositions in the proof of Theorem~\ref{thm: main theorem}.  Therefore, the number of exceptional objects $t$ and $s$ may depend on these choices as well.  However, we expect that they do not.  This independence would follow from a version of \cite[Theorem A]{KS22} for matrix factorization categories.
\end{remark}

\begin{example}
We demonstrate that the ampleness assumption for the divisors in Theorem \ref{thm: main theorem} is necessary by exhibiting a counterexample under the weaker assumption that the divisors are nef.   
Consider a hypersurface $Y \subseteq \mathbb P^n$ of degree $d\leq n+1$ as in Example~\ref{standard example} so that
\[
\dbcoh{Y} = \langle \mathcal A, \mathcal O_Y(-n+d), ..., \mathcal O_Y \rangle.
\]
The sequence of wall-crossings for $Y \times Y$ as a complete intersection in $\mathbb P^n \times \mathbb P^n$ gives
\begin{align*} \dbcoh{(Y \times Y)} = \langle 
\mathcal A \boxtimes \mathcal A, \mathcal A\boxtimes \mathcal O_Y(-n+d), ..., \mathcal A \boxtimes \mathcal O_Y, \mathcal O_Y(-n+d) \boxtimes \mathcal A, ..., \mathcal O_Y \boxtimes \mathcal A , \\
\mathcal O_{Y \times Y}(-n+d,-n+d), ..., \mathcal O_{Y \times Y} \rangle.
\end{align*}
\end{example}

\begin{example}
Consider a Fano threefold $Y_d$ with $\op{Pic} Y_d = \Z$, index 2, and degree $d$. Then,   there is a semiorthogonal decomposition \cite[Corollary 3.5]{Kuz09}
\[
\dbcoh{Y_d} = \langle \mathcal{B}_{Y_d}, \O_{Y_d}, \O_{Y_d}(H)\rangle.
\]
For $d =1,2, 3, 4$, the category $\mathcal{B}_{Y_d}$ is category $\mathcal K$ in Theorem~\ref{thm: main theorem}. We expect that, for the final Fano threefold $Y_5$ with $\op{Pic} Y_5 = \Z$ and index 2, the category $\mathcal{B}_{Y_d}$ can be realized as the Kuznetsov category of a (non-abelian) GLSM.

As seen in loc. cit., given a Fano threefold $X = X_{2g-2}$ of index 1 and even genus $g = 2t$, then, by using Mukai bundles \cite{Mukai, Kuz09, BKM}, there is a pair $(\mathcal{E}, \O_X)$ of exceptional objects and a semiorthogonal decomposition
$\dbcoh{X_{2g-2}} = \langle \mathcal{A}_{X_{2g-2}}, \mathcal E, \O_{X_{2g-2}}\rangle$. It would be very interesting to naturally realize the category $\mathcal{A}_{X_{2g-2}}$ as a Kuznetsov category of a GLSM, in light of the recent proof by Kuznetsov-Shinder of Kuznetsov's conjecture \cite{KuzShin}.
\end{example}

In the hypersurface case, we can say more.  Suppose $X$ is a projective, smooth, Fano toric variety (or DM stack) and that $D = -qK_{X}$ for some $q \in \Q_{>0}$. Consider the fan $\Sigma^{\mathcal{E}}$ corresponding to the toric stack $\op{tot} \mathcal{E} = \op{tot} \mathcal{O}_{\mathcal{X}_\Sigma}(-D)$. The secondary fan $\Sigma^{\mathcal E}_{\op{GKZ}}$ is a complete fan that contains the secondary fan $\Sigma^X_{\op{GKZ}}$ and an additional ray in the direction of $K_{X} \in \op{Cl}(X)_{\R}$. One computes the canonical character $\theta_K = -(1-q)K_{X}$. Thus if $q <1$, we have that it is on the ray in the secondary fan corresponding to the bundle coordinate. Pick a Kuznetsov chamber $\sigma_K$. We note by Corollary~\ref{cor: independence}, any choice will correspond to an equivalent absolute derived category.  The following is a toric generalization of Example~\ref{standard example}:
\begin{theorem}\label{thm: Fano CY category}
  Suppose $X$ is a projective, smooth Fano toric variety (or DM stack) and $D = -qK_{X}$ for some rational number $q > 0$. Choose a global section $f \in \Gamma(X, \O(D))$. Write $Z$ for the (stacky) toric hypersurface $ Z:=Z(f) \subseteq X$. If $q \leq 1$, then we have a semi-orthogonal decomposition
  $$
  \dbcoh{Z} \cong \langle \mathcal K , E_1, \dots, E_t\rangle,
  $$
  and if $q \geq 1$, then we have a semi-orthogonal decomposition
    $$
  -\mathcal K  \cong \langle\dbcoh{Z}, E_1, \dots, E_s\rangle,
  $$
  where each $E_i$ is an exceptional object.
  If $Z$ is smooth and $q\leq 1$ then $\mathcal K$  is fractional Calabi-Yau.   If $Z$ is smooth and $q\geq 1$ then $-\mathcal K$  is fractional Calabi-Yau.  Furthermore, if $q = \frac{1}{r}$ for some positive $r \in \Z$ and $D$ is Cartier, then $\mathcal K$ is Calabi-Yau. 
\end{theorem}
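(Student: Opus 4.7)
The plan is to combine Theorem~\ref{thm: main theorem} with an explicit computation of the Serre functor on the factorization category in the Kuznetsov chamber. First, since $X$ is Fano and $q > 0$, the divisor $D = -qK_X$ is ample, so Theorem~\ref{thm: main theorem} applies directly in the hypersurface case $r = 1$ and yields both semi-orthogonal decompositions simultaneously. The dichotomy $q \leq 1$ versus $q \geq 1$ just tracks which side of the origin in $\widehat{\ker\chi}_{\mathbb R}$ the canonical character $\theta_K = -(1-q)K_X$ lies on, and hence which of the two decompositions is the nontrivial one; at $q = 1$ we have $\theta_K = 0$ and both $\mathcal K$ and $-\mathcal K$ collapse to $\dbcoh{Z}$.

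For the fractional Calabi-Yau claim, I would use that Proposition~\ref{prop:strongly convex weight cone} guarantees $[V_{\theta_{K+\epsilon}}/\ker\chi]$ is a smooth proper toric Deligne-Mumford stack in the Kuznetsov chamber, so Grothendieck-Serre duality for absolute derived factorization categories should give a Serre functor of the form
\[
S_\mathcal K \cong (-) \otimes \mathcal O(\widetilde{\theta_K})[N],
\]
where $\widetilde{\theta_K} \in \widehat{\Gamma}$ is any lift of $\theta_K$ and $N$ is the stack dimension. Any ambiguity in the lift is absorbed by the isomorphism $\mathcal O(\chi) \cong [2]$ intrinsic to factorization categories. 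Next I would argue that the image of $\theta_K$ in $\op{Pic}([V_{\theta_{K+\epsilon}}/\ker\chi])$ is torsion of some finite order $m$: the Kuznetsov chamber compresses the geometric $K_X$ direction into a finite cyclic stabilizer, which is forced by the ampleness of $D$ together with the Fano hypothesis on $X$. Then $m\widetilde{\theta_K} = k\chi$ in $\widehat{\Gamma}$ for some integer $k$, and iterating yields $S_\mathcal K^m \cong [mN + 2k]$, exhibiting $\mathcal K$ as fractional Calabi-Yau of dimension $(mN + 2k)/m$. The case $q \geq 1$ for $-\mathcal K$ is symmetric, with $\theta_{-K}$ in place of $\theta_K$.

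For the integral Calabi-Yau statement, a direct computation when $q = 1/r$ gives $\theta_K = (r-1)D$, an integer multiple of the Cartier class $D$. The Cartier hypothesis then ensures that this character lifts so that its image in $\op{Pic}([V_{\theta_{K+\epsilon}}/\ker\chi])$ vanishes outright rather than being merely torsion, so the previous argument applies with $m = 1$ and $S_\mathcal K$ itself is a pure cohomological shift.

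The main technical obstacle will be importing a precise Grothendieck-Serre duality statement for absolute derived factorization categories on smooth proper toric DM stacks, and then tracking the GIT combinatorics in the Kuznetsov chamber carefully enough to verify that the image of $\theta_K$ in the Picard group of the Kuznetsov-chamber stack is torsion (respectively trivial in the Calabi-Yau case). Once these two ingredients are in hand, the character arithmetic and the identification $\mathcal O(\chi) \cong [2]$ close the argument cleanly.
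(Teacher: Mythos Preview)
Your reduction of the semi-orthogonal decompositions to Theorem~\ref{thm: main theorem} matches the paper exactly, and your identification of $\theta_K = -(1-q)K_X$ is correct.

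For the (fractional) Calabi--Yau statements, the paper takes a different and much shorter route: it simply invokes \cite[Proposition~4.9, Lemma~4.24, Corollary~5.6]{FK18}, which package the Serre functor computation in terms of the $\Q$-Gorenstein (respectively almost Gorenstein) structure on the cone $|\Sigma^{\mathcal E}|$ and the fact that the rays of the Kuznetsov-chamber fan sit at height one.  Your plan to compute the Serre functor directly is essentially what those cited results do under the hood, so the two approaches are morally the same; the paper just outsources the work.

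There is, however, a genuine error in your argument as written.  You invoke Proposition~\ref{prop:strongly convex weight cone} to conclude that the Kuznetsov-chamber stack $[V_{\theta_{K+\epsilon}}/\ker\chi]$ is proper, but that proposition concerns the cone $C_W^{\mathcal E}$ attached to a \emph{wall}, not to a chamber; it is used in the proof of Theorem~\ref{thm: main theorem} to show that the wall contributions are proper toric stacks.  The Kuznetsov-chamber stack itself is typically \emph{not} proper: already in Example~\ref{standard example} with $n+1>d$, the Kuznetsov chamber is the Landau--Ginzburg phase and the stack is $[\A^n/\mu_d]$, which is affine.  The Serre functor on $\mathcal K$ nonetheless exists because the factorization category is smooth and proper as a dg-category when $Z$ is smooth, but this is a different (and more delicate) statement than properness of the underlying stack, and it is precisely what the machinery in \cite{FK18} establishes.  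Similarly, your torsion claim for the image of $\theta_K$ in the Picard group is asserted rather than proved; the $\Q$-Gorenstein cone condition in \cite{FK18} is what makes this precise.  So your outline is on the right track but would need those results (or equivalent ones) to close the gaps you yourself flag at the end.
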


\begin{proof}
The semi-orthogonal decompositions are an example of Theorem~\ref{thm: main theorem}. 
Now by \cite[Proposition 4.9]{FK18}, $|\Sigma^{\mathcal{E}}|$ is a $\Q$-Gorenstein cone, and if $q = \tfrac{1}{r}$ and $D$ is Cartier then $|\Sigma^{\mathcal{E}}|$ is almost Gorenstein.
By \cite[Lemma 4.24]{FK18}, if $q \leq 1$, then $\sigma_K$ gives a fan whose rays lie at height one of $|\Sigma^{\mathcal{E}}|$.  On the other hand if $q \geq 1$, then $\sigma_{-K}$ gives a fan whose rays lie at height one of $|\Sigma^{\mathcal{E}}|$.
  The claim then follows by applying \cite[Corollary 5.6]{FK18}.
\end{proof}

\begin{remark}
  The existence of these fractional Calabi-Yau categories relating to $\dbcoh{Z}$ is a consequence of \cite[Corollary 5.12]{FK18}; however, the description of the right orthogonal in terms of exceptional collections is new. 
\end{remark}

\section{Anti-Kuznetsov Categories and Fano visitors}\label{sec:FanoVisitors}
Consider a projective variety $X$ obtained as a GIT quotient $X = [V\modmod{\theta} G]$.  
Let $W$ be a $G$-representation.  Let $\mathcal O_X(W)$ be the associated vector bundle and $s \in \Gamma(X, \mathcal O_X(W)) = (\op{Sym}  V^\vee \otimes W^\vee)^G$ be a regular section.
\begin{definition}\label{def:positive throuple}
    Let $V,W$ be $G$-representations and $\theta\in \widehat{G}$. We say $(V,W,\theta)$ is a \newterm{positive triple} if we have the equality of ideals
      \begin{equation}\label{eq: christmas miracle}
    \left\langle(\op{Sym} V^\vee \otimes \op{Sym} W^\vee(\theta^r))^G: r >0\right\rangle =\left\langle(\op{Sym} V^\vee (\theta^r))^G: r>0\right\rangle.
    \end{equation}
\end{definition}

\begin{example}
If $X=[V\modmod{\theta}G]$ is a projective Fano GIT quotient and $W = \bigoplus \mathbb C(-D_i)$ comes from a sum of anti-ample representations,  then $(V, W, \theta)$ is a positive triple (see the proof of Corollary~\ref{cor: xmas miracle}). This is straightforward to see in the toric case.
\end{example}

Then $Z := Z(s)$ is a complete intersection in $X$.   Consider the projectivization, $\mathbb P(\mathcal O_X(W))$. There is a canonical isomorphism 
\begin{equation} \label{eq: can iso}
\Gamma(X, \mathcal O_X(W)) \cong \Gamma(\mathbb P(\mathcal O_X(W)), \mathcal O_{rel}(1)).
\end{equation}
where
$\mathcal O_{rel}(1)$ is the relative hyperplane bundle.
Hence, the section $s$ also defines a hypersurface $Y \subseteq \mathbb P(\mathcal O_X(W))$.
The following theorem generalizes the main result of \cite{KKLL}.

\begin{theorem}  \label{thm: fano visitor}
There is a semi-orthogonal decomposition
\begin{equation}\label{eq: Fano Visitor}
 \dbcoh{Y} = \langle \dbcoh{X}, ..., \dbcoh{X}\otimes \mathcal O_{rel}(\dim W-2), \dbcoh{Z}\rangle.
\end{equation}
Moreover, 
\begin{enumerate}
    \item If $\O_X(\det(W^\vee\oplus V))$ is nef, $\dim W \ge 2$, and $\mathcal O_{rel}(1)$ is ample, then $Y$ is Fano and $Z$ is a Fano visitor.
    \item If $X = [V\modmod{\theta_{-K+\epsilon}} G]$ is sufficiently generic and $(V, W, \theta_{-K+\epsilon})$ is a positive triple, then there is a GLSM such that $\dbcoh{Y}$ is an anti-Kuznetsov category.  
\end{enumerate}
\end{theorem}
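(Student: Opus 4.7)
My plan is to build an auxiliary two-parameter GLSM whose GIT wall-crossing interpolates between geometric realizations of $\dbcoh{Z}$ and $\dbcoh{Y}$. I take the representation $V \oplus W^\vee \oplus \mathbb{C}$ with coordinates $(x,\xi,u)$, the group $\Gamma := G \times \mathbb{G}_m^{(1)} \times \mathbb{G}_m^{(2)}$ (with $G$ acting naturally, $\mathbb{G}_m^{(1)}$ scaling $\xi$, $\mathbb{G}_m^{(2)}$ scaling $u$), the character $\chi(g,t_1,t_2) := t_1 t_2$, and the superpotential $w := u \cdot \langle s(x),\xi\rangle$. Then $w$ is $\chi$-semi-invariant, $\ker\chi \cong G \times \mathbb{G}_m$ via $(g,t)\mapsto(g,t,t^{-1})$, and the GIT fan for $\ker\chi$ has a wall $\{a=0\}$ (where $a$ denotes the $\widehat{\mathbb{G}_m}$-coordinate) separating chambers $C_Z$ ($a<0$) and $C_Y$ ($a>0$).

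In $C_Z$, the semistable locus is $V_{\theta_X} \times W^\vee \times (\mathbb{A}^1\setminus 0)$, so the GIT quotient is $\mathrm{tot}(\mathcal{O}_X(W^\vee))$ with fiberwise dilation by the residual $\mathbb{G}_m$, and $w$ is the pairing with $s$; Theorem~\ref{thm: IsikShipmanHirano} then identifies the absolute derived category in $C_Z$ with $\dbcoh{Z}$. In $C_Y$, the semistable locus is $V_{\theta_X} \times (W^\vee \setminus 0) \times \mathbb{A}^1$, with GIT quotient $\mathrm{tot}(\mathcal{O}_{rel}(-1))$ over $\mathbb{P}(\mathcal{O}_X(W))$; under (\ref{eq: can iso}), $w$ now pairs with $s$ viewed as a section of $\mathcal{O}_{rel}(1)$, so Theorem~\ref{thm: IsikShipmanHirano} identifies the absolute derived category in $C_Y$ with $\dbcoh{Y}$. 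The decomposition (\ref{eq: Fano Visitor}) then follows from Theorem~\ref{thm: BFK wall crossing}(ii): the primitive one-parameter subgroup $\lambda(t)=(1,t^{-1}) \in \ker\chi$ satisfies $V^\lambda = V$ and $w|_{V^\lambda}=0$, and a direct computation of the canonical character gives $\theta_K \circ \lambda(t) = t^{\dim W - 1}$. This produces $\dim W - 1$ wall components each equivalent via \cite[Proposition 2.1.6]{HPD} to $\dbcoh{X}$ (the quotient $\ker\chi/\lambda \cong G$ absorbing the $\mathbb{G}_m$-factor), with distinct $\mathcal{O}_{rel}(i)$-twists coming from the residual character.

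For (1), adjunction combined with the relative Euler sequence gives
\[
-K_Y = (\dim W - 1)\,\mathcal{O}_{rel}(1)|_Y + \pi^*\mathcal{O}_X(\det(V \oplus W^\vee))|_Y,
\]
where $-K_X$ corresponds to $\det V$ under the GIT description. The hypotheses that $\mathcal{O}_{rel}(1)$ is ample, $\mathcal{O}_X(\det(V \oplus W^\vee))$ is nef, and $\dim W \geq 2$ then force $-K_Y$ to be ample, so $Y$ is Fano; the decomposition (\ref{eq: Fano Visitor}) exhibits $Z$ as a Fano visitor with host $Y$. For (2), I compute the anti-canonical character of the auxiliary GLSM: $\theta_{-K} = (\theta_{-K_X} + \det W^\vee,\,\dim W - 1) \in \widehat{G} \oplus \mathbb{Z}$, whose $\mathbb{Z}$-component is strictly positive, placing it in $\{a>0\}$. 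The positive triple condition on $(V,W,\theta_{-K+\epsilon})$ via (\ref{eq: christmas miracle}) identifies the $G$-unstable locus on $V \oplus W^\vee$ with the pullback of the $V$-unstable locus, and sufficient genericity of $\theta_{-K+\epsilon}$ ensures the anti-Kuznetsov character lies in the interior of $C_Y$. Corollary~\ref{cor: independence} together with the $C_Y$-identification above then realizes $\dbcoh{Y}$ as the anti-Kuznetsov category of the constructed GLSM.

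The hardest step will be part (2): translating (\ref{eq: christmas miracle}) into a precise statement about which codimension-one faces of the enlarged GIT fan contain $\theta_{-K,\text{GLSM}}$, and thereby ruling out spurious walls between $\theta_{-K,\text{GLSM}}$ and the generic interior of $C_Y$ arising from $G$-stability on mixed $x$-$\xi$ monomials. Without this, additional semi-orthogonal components could be picked up and the anti-Kuznetsov category might fail to coincide with $\dabs(V_{C_Y}, \Gamma, w) = \dbcoh{Y}$ on the nose.
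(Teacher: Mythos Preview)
Your auxiliary GLSM is, up to a harmless reparameterization of the torus, exactly the one the paper constructs, and your identification of the two phases with $\dbcoh{Z}$ and $\dbcoh{Y}$ via Theorem~\ref{thm: IsikShipmanHirano} matches the paper's argument. There are, however, two points where your write-up diverges from the paper in a way that creates a gap.

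First, you invoke Theorem~\ref{thm: BFK wall crossing} and Corollary~\ref{cor: independence}, both of which are stated in this paper only for \emph{abelian} GLSMs, whereas the present theorem places no such hypothesis on $G$. The paper handles this by not appealing to the GIT fan at all: it applies the raw one-parameter-subgroup version \cite[Theorem~3.5.2]{VGIT} to the space $V_\theta \times W^\vee \times \mathbb{C}$ (with the $G$-semistable locus on $V$ already fixed), which requires no abelian hypothesis and no identification of an actual wall in $\Sigma_{\mathrm{GKZ}}$. Your claim that the full GIT fan for $\ker\chi$ has exactly the wall $\{a=0\}$ with the stated semistable loci on either side is not automatic for non-abelian $G$ and in fact is precisely what the positive-triple condition is designed to secure; the paper's remark after the proof flags this distinction explicitly.

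Second, and relatedly, for (ii) the paper does not argue by wall-crossing independence. Instead it computes the $\theta_{-K+\epsilon}$-semistable locus directly: the positive-triple identity \eqref{eq: christmas miracle} forces the ideal of unstable points to be generated by $(\op{Sym}V^\vee(\pi(\theta_{-K+\epsilon}^r)))^G \otimes \op{Sym}^{r(\dim W-1)}W^\vee$, whence the unstable set is $V^{us}_{\pi(\theta_{-K+\epsilon})} \times 0 \times \mathbb{C}$ and the quotient is $\tot(\mathcal{O}_{rel}(-1))$ on the nose. This bypasses both Corollary~\ref{cor: independence} and the ``spurious walls'' issue you correctly anticipate in your final paragraph. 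Your treatment of (i) via adjunction and the relative Euler sequence is fine and amounts to reproving \cite[Lemma~3.1]{KKLL}, which the paper simply cites.
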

\begin{proof}
Consider a new GIT problem of $G \times \Gm$ acting on $V \times W^\vee \times \mathbb C$.  We let $G$ act on $V$ as before, on $W^\vee$ as the dual representation, and trivially on $\mathbb C$.  We let $\mathbb G_m$ act trivially on $V$, by scaling $W^\vee$, and by anti-scaling on $\mathbb C$.  In addition, consider a third $\mathbb G_m$-action given just by scaling the $\mathbb C$ factor.  A choice of any character $\theta$ gives a GLSM $(V \times W \times \mathbb C, G \times \mathbb G_m \times \mathbb G_m, \pi_3, \theta, ps)$ where $\pi_3$ is projection onto the final $\mathbb G_m$ and $p$ is the coordinate on $\mathbb C$.

Consider the 1-parameter subgroup of $\Gamma$ given by the middle $\gm$-factor acting on $V_\theta \times W \times \mathbb C$.
We will use \cite[Theorem 3.5.2]{VGIT} and the notation therein.
One computes that
\begin{align*}
(V_\theta \times W \times \mathbb C)_+ & = V_\theta \times W \backslash 0 \times \mathbb C \\
(V_\theta \times W \times \mathbb C)_- & = V_\theta \times W \times \mathbb C^*
\end{align*}
so that
\begin{align*}
[(V_\theta \times W \times \mathbb C)_+/\Gamma] & = [\tot(\O_{rel}(-1))/\gm] \\
[(V_\theta \times W \times \mathbb C)_-/\Gamma] & = [\tot(\O_X(W))/\gm]     
\end{align*}
where each $\gm$ acts by fiberwise dilation.
Furthermore
\[
(V_\theta \times W \times \mathbb C)^{\lambda} = V_\theta \times 0 \times 0.
\]
Define
\[
w := s \otimes u \in \op{Sym}V^\vee \otimes\op{Sym}W^\vee \otimes k[u]. 
\]

Hence \cite[Theorem 3.5.2]{VGIT} gives
\begin{align*}
\dabs(X_+, \Gamma, w_+) = \langle \dabs(V_\theta, G \times \gm, 0), \dots, \dabs(V_\theta, G \times \gm, 0) \otimes \mathcal O_{rel}&(\dim W-2), \\ &\dabs(X_-, \Gamma, w_-)\rangle.
\end{align*}
Finally, $\dabs(X_+, \Gamma, w_+) \cong \dbcoh{Y}$ and $\dabs(X_-, \Gamma, w_-) \cong \dbcoh{Z}$ by Theorem~\ref{thm: IsikShipmanHirano} and $\dabs(V_\theta, G \times \mathbb G_m, 0) \cong \dbcoh{X}$ by \cite[Proposition 2.1.6]{HPD}, hence ~\eqref{eq: Fano Visitor} holds.

To prove (i), first note that  since $X$ is a GIT quotient, $\omega_X^{-1} = \O_X(\det(V))$.  Therefore, $\O_X(\det(W^\vee \oplus V)) = (\omega_X \otimes \O_X(W))^{-1}$.  Hence, the assumption that $\O_X(\det(W^\vee \oplus V))$ is nef and $\O_{rel}(1)$ is ample are precisely the assumptions of \cite[Lemma 3.1]{KKLL} which concludes that $Y$ is Fano.

Finally for (ii), choose a character $\theta_{-K + \epsilon} \in \widehat{G \times \gm}$ which is sufficiently generic so that it lies in an anti-Kuznetsov chamber in $(\widehat{G \times \gm})_{\mathbb R}$ and its projection under $\pi: (\widehat{G \times \gm})_{\mathbb R} \to \widehat{G}_{\R}$ lies in an anti-Kuznetsov chamber in $\widehat{G}_{\mathbb R}$.
Then, the GLSM 
$$(V \times W \times \mathbb C, G \times \gm \times \gm, \pi_3, \theta_{-K + \epsilon}, w)$$ is geometric with associated complete intersection $Y$.

To see the final statement, we claim that the semistable locus for $\theta_{-K + \epsilon}$ is $V_{\pi(-K + \epsilon)} \times W \backslash 0 \times \mathbb C$.  Hence, the GIT quotient is the total space of $\mathcal O_{rel}(-1)$ and $w$ descends to the pairing with $s$ via the isomorphism \eqref{eq: can iso}.  This is due to the fact that the ideal defining the unstable locus is, by definition, generated by functions in
\begin{align*}
(\op{Sym}V^\vee \otimes\op{Sym}&W^\vee \otimes k[u](\theta_{-K + \epsilon}^r))^{G \times \gm} \\& = (\op{Sym}V^\vee(\pi(\theta_{-K + \epsilon}^r)))^G\otimes (\op{Sym} W^\vee \otimes k[u](r(\dim W-1)))^{\gm} & \text{by \eqref{eq: christmas miracle}} \\
& = (\op{Sym}V^\vee(\pi(\theta_{-K + \epsilon}^r)))^G\otimes (\op{Sym}^{r(\dim W-1)}W^\vee)
\end{align*}
for $r >0$.  The zero locus is therefore $V_{\pi(\theta_{-K + \epsilon})} \times 0 \times \mathbb C$, as desired. The final statement then follows from Theorem~\ref{thm: IsikShipmanHirano}.
\end{proof}
%

\begin{remark}
    The proof of~\eqref{eq: Fano Visitor} uses the raw form of \cite[Theorem 3.5.2]{VGIT} which specifies a one-parameter subgroup directly.  It does not use the full GIT fan of the GLSM or address whether or not this comes from a natural wall-crossing from the GLSM. 
\end{remark}

\begin{corollary} \label{cor: xmas miracle}
Any complete intersection of $r\ge 2$ ample divisors in a Fano GIT quotient $X$ is a Fano visitor and the derived category of the Fano host is equivalent to an anti-Kuznetsov category.
\end{corollary}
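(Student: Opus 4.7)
My plan is to apply Theorem~\ref{thm: fano visitor} directly. Write $X = [V\modmod{\theta}G]$ for the Fano GIT presentation and let $Z$ be the complete intersection cut out by ample divisors $D_1, \dots, D_r$ via regular sections $s_i \in \Gamma(X, \O_X(D_i))$. Take the $G$-representation
$$
W := \bigoplus_{i=1}^r \mathbb C(-D_i),
$$
so that $\O_X(W) = \bigoplus_{i=1}^r \O_X(D_i)$ and the combined section $s = (s_1, \dots, s_r) \in \Gamma(X, \O_X(W))$ has vanishing locus $Z$. The hypersurface $Y \subset \PP(\O_X(W))$ corresponding to $s$ under~\eqref{eq: can iso} is the proposed Fano host.

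For part~(i) of Theorem~\ref{thm: fano visitor}, three hypotheses must be checked: $\dim W = r \ge 2$ by assumption; $\O_{rel}(1)$ is ample because $\O_X(W)$ is a direct sum of ample line bundles; and $\O_X(\det(W^\vee \oplus V)) \cong \omega_X^{-1} \otimes \O_X(\sum_{i=1}^r D_i)$ is ample (hence nef), being a tensor product of the ample anticanonical bundle on the Fano variety $X$ with ample line bundles. Applying part~(i) then exhibits $\dbcoh{Z}$, via~\eqref{eq: Fano Visitor}, as an admissible subcategory of $\dbcoh{Y}$ for the Fano host $Y$.

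For the identification of $\dbcoh{Y}$ with an anti-Kuznetsov category, I would apply part~(ii) of Theorem~\ref{thm: fano visitor}, choosing a generic $\theta_{-K+\epsilon}$ in an anti-Kuznetsov chamber of the GIT problem on $V \times W \times \C$ whose projection to $\widehat G_{\R}$ also lies in an anti-Kuznetsov chamber. The key verification is that $(V, W, \theta_{-K+\epsilon})$ is a positive triple, which is the content of the example following Definition~\ref{def:positive throuple}: because $W$ is built from anti-ample representations of $G$ and $X$ is Fano, any $\theta_{-K+\epsilon}^r$-semi-invariant in $\op{Sym}(V\oplus W)^\vee$ can, modulo the required ideal, be expressed in terms of $G$-semi-invariants pulled back from $V$, establishing~\eqref{eq: christmas miracle}. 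The main obstacle is precisely this equality of ideals: in the toric case it reduces to explicit GKZ combinatorics as in Section~\ref{sec:toric complete intersections}, while for general reductive $G$ it requires a more conceptual argument leveraging ampleness of the $D_i$ together with the Fano condition on $X$. Once the positive triple property is established, Theorem~\ref{thm: fano visitor}(ii) identifies $\dbcoh{Y}$ with the desired anti-Kuznetsov category, completing the proof.
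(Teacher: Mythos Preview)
Your approach is the same as the paper's: invoke Theorem~\ref{thm: fano visitor}(i) for the Fano visitor statement and Theorem~\ref{thm: fano visitor}(ii) for the anti-Kuznetsov identification. Your verification of the hypotheses for part~(i) is fine and matches the paper.

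The gap is in part~(ii). You correctly isolate the positive triple condition~\eqref{eq: christmas miracle} as the only thing to check, but then you do not check it: you cite the Example after Definition~\ref{def:positive throuple}, which is circular (that Example explicitly defers its justification to the proof of this very Corollary), and you then label the equality of ideals as ``the main obstacle'' and leave it unresolved. A proof proposal that ends by naming its own missing step is not yet a proof.

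The paper's argument for~\eqref{eq: christmas miracle} is short and direct, and you are close to it. Since each $D_i$ is ample, $W^\vee$ is a sum of ample characters, so every irreducible $G$-summand of $\op{Sym}W^\vee(\theta_{-K+\epsilon}^r)$ is a positive tensor power of ample characters and in particular is nontrivial. Hence any $G$-invariant in $\op{Sym}V^\vee \otimes \op{Sym}W^\vee(\theta_{-K+\epsilon}^r)$ must involve a nontrivial (anti-ample) factor from $\op{Sym}V^\vee$ to cancel that character; this forces the invariant to lie in the ideal generated by $(\op{Sym}V^\vee(\theta_{-K+\epsilon}^r))^G$. That is the whole verification. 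No GKZ combinatorics or separate toric/non-toric analysis is needed; the Fano hypothesis enters only to guarantee that $\theta_{-K+\epsilon}$ is itself ample, so the twist does not spoil the positivity.
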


\begin{proof}
As $X = [V \modmod{\theta}G]$, we have $\omega^{-1}_X \cong \mathcal O_X(\det V)$.  Hence, the assumption that $X$ is Fano is equivalent to requiring that $\mathcal O_X(\det V)$ is ample.  On the other hand, if our complete intersection comes from ample divisors, then $\O_X(W^\vee)$ is a sum of ample line bundles.  Therefore the assumptions of Theorem~\ref{thm: fano visitor}(i) hold.  

Now we check that $(V,W, \pi(\theta_{-K+ \epsilon}))$ is a positive triple.   Consider the trivial subrepresentatations of $\op{Sym }V^\vee \otimes \op{Sym }W^\vee(\theta_{-K+\epsilon}^r)$.  Since $W^\vee$ is a sum of ample representations, so is $\op{Sym }W^\vee(\theta_{-K+\epsilon}^r)$.  Hence, trivial representations can only occur if one cancels a non-trivial representation in $\op{Sym }W^\vee(\theta_{-K+\epsilon}^r)$ with an anti-ample representation in 
$\op{Sym }V^\vee$. This means that any element of $(\op{Sym }V^\vee \otimes \op{Sym }W^\vee(\theta_{-K+\epsilon}^r))^G$ lies in the ideal generated by $$(\op{Sym }V^\vee(\theta_{-K+\epsilon}^r))^G,$$ as desired.
\end{proof}

\begin{remark}
A hypersurface in projective space is a codimension 2 complete intersection of ample divisors in a higher-dimensional projective space. This means that Corollary~\ref{cor: xmas miracle} generalizes the main theorem of~\cite{KKLL}. The authors do not know if a similar construction can be made for hypersurfaces in a general Fano GIT quotient. 
\end{remark}

\bibliography{biblio}

@book {GKZ,
    AUTHOR = {Gelfand, I. M. and Kapranov, M. M. and Zelevinsky, A.
              V.},
     TITLE = {Discriminants, resultants, and multidimensional determinants},
    SERIES = {Mathematics: Theory \& Applications},
 PUBLISHER = {Birkh\"auser Boston, Inc., Boston, MA},
      YEAR = {1994},
     PAGES = {x+523},
      ISBN = {0-8176-3660-9},
   MRCLASS = {14N05 (13D25 14M25 15A69 33C70 52B20)},
  MRNUMBER = {1264417},
MRREVIEWER = {I.\ Dolgachev},
       DOI = {10.1007/978-0-8176-4771-1},
       URL = {https://doi.org/10.1007/978-0-8176-4771-1},
}

@article {FK17,
    AUTHOR = {Favero, David and Kelly, Tyler L.},
     TITLE = {Proof of a conjecture of {B}atyrev and {N}ill},
   JOURNAL = {Amer. J. Math.},
  FJOURNAL = {American Journal of Mathematics},
    VOLUME = {139},
      YEAR = {2017},
    NUMBER = {6},
     PAGES = {1493--1520},
      ISSN = {0002-9327,1080-6377},
   MRCLASS = {14J33 (14F05 14M25)},
  MRNUMBER = {3730928},
MRREVIEWER = {Arnav\ Tripathy},
       DOI = {10.1353/ajm.2017.0038},
       URL = {https://doi.org/10.1353/ajm.2017.0038},
}

@article {Kuz05,
    AUTHOR = {Kuznetsov, A. G.},
     TITLE = {Derived categories of the {F}ano threefolds {$V_{12}$}},
   JOURNAL = {Mat. Zametki},
  FJOURNAL = {Matematicheskie Zametki},
    VOLUME = {78},
      YEAR = {2005},
    NUMBER = {4},
     PAGES = {579--594},
      ISSN = {0025-567X,2305-2880},
   MRCLASS = {14F05 (14J45 18E30)},
  MRNUMBER = {2226730},
MRREVIEWER = {Adrian\ Langer},
       DOI = {10.1007/s11006-005-0152-6},
       URL = {https://doi.org/10.1007/s11006-005-0152-6},
}

@article {HPD,
    AUTHOR = {Ballard, Matthew and Deliu, Dragos and Favero, David and Isik,
              M. Umut and Katzarkov, Ludmil},
     TITLE = {Homological projective duality via variation of geometric
              invariant theory quotients},
   JOURNAL = {J. Eur. Math. Soc. (JEMS)},
  FJOURNAL = {Journal of the European Mathematical Society (JEMS)},
    VOLUME = {19},
      YEAR = {2017},
    NUMBER = {4},
     PAGES = {1127--1158},
      ISSN = {1435-9855,1435-9863},
   MRCLASS = {14F05 (14J33 14L24 18E30)},
  MRNUMBER = {3626552},
MRREVIEWER = {Riccardo\ Moschetti},
       DOI = {10.4171/JEMS/689},
       URL = {https://doi.org/10.4171/JEMS/689},
}

@article {HLS,
    AUTHOR = {Halpern-Leistner, Daniel and Sam, Steven V.},
     TITLE = {Combinatorial constructions of derived equivalences},
   JOURNAL = {J. Amer. Math. Soc.},
  FJOURNAL = {Journal of the American Mathematical Society},
    VOLUME = {33},
      YEAR = {2020},
    NUMBER = {3},
     PAGES = {735--773},
      ISSN = {0894-0347,1088-6834},
   MRCLASS = {14F08 (14L24 19E08)},
  MRNUMBER = {4127902},
MRREVIEWER = {Alfonso\ Zamora},
       DOI = {10.1090/jams/940},
       URL = {https://doi.org/10.1090/jams/940},
}

@article {VGIT,
    AUTHOR = {Ballard, Matthew and Favero, David and Katzarkov, Ludmil},
     TITLE = {Variation of geometric invariant theory quotients and derived
              categories},
   JOURNAL = {J. Reine Angew. Math.},
  FJOURNAL = {Journal f\"{u}r die Reine und Angewandte Mathematik. [Crelle's
              Journal]},
    VOLUME = {746},
      YEAR = {2019},
     PAGES = {235--303},
      ISSN = {0075-4102,1435-5345},
   MRCLASS = {14F05 (14J33 14L24)},
  MRNUMBER = {3895631},
MRREVIEWER = {Richard\ P.\ Thomas},
       DOI = {10.1515/crelle-2015-0096},
       URL = {https://doi.org/10.1515/crelle-2015-0096},
}

@incollection {Orlov,
    AUTHOR = {Orlov, Dmitri},
     TITLE = {Derived categories of coherent sheaves and triangulated
              categories of singularities},
 BOOKTITLE = {Algebra, arithmetic, and geometry: in honor of {Y}u. {I}.
              {M}anin. {V}ol. {II}},
    SERIES = {Progr. Math.},
    VOLUME = {270},
     PAGES = {503--531},
 PUBLISHER = {Birkh\"{a}user Boston, Boston, MA},
      YEAR = {2009},
      ISBN = {978-0-8176-4746-9},
   MRCLASS = {14F05 (14B05 16E35 18E30 81T30)},
  MRNUMBER = {2641200},
MRREVIEWER = {Yasunari\ Nagai},
       DOI = {10.1007/978-0-8176-4747-6\{_}16}

@article {Kawamata,
    AUTHOR = {Kawamata, Yujiro},
     TITLE = {Derived categories of toric varieties},
   JOURNAL = {Michigan Math. J.},
  FJOURNAL = {Michigan Mathematical Journal},
    VOLUME = {54},
      YEAR = {2006},
    NUMBER = {3},
     PAGES = {517--535},
      ISSN = {0026-2285,1945-2365},
   MRCLASS = {14M25 (14F05 18E30)},
  MRNUMBER = {2280493},
MRREVIEWER = {Bal\'{a}zs\ Szendr\H{o}i},
       DOI = {10.1307/mmj/1163789913},
       URL = {https://doi.org/10.1307/mmj/1163789913},
}

@article {BFK14a,
    AUTHOR = {Ballard, Matthew and Favero, David and Katzarkov, Ludmil},
     TITLE = {A category of kernels for equivariant factorizations and its
              implications for {H}odge theory},
   JOURNAL = {Publ. Math. Inst. Hautes \'{E}tudes Sci.},
  FJOURNAL = {Publications Math\'{e}matiques. Institut de Hautes \'{E}tudes
              Scientifiques},
    VOLUME = {120},
      YEAR = {2014},
     PAGES = {1--111},
      ISSN = {0073-8301,1618-1913},
   MRCLASS = {14F05 (14C30 18E30 18G55)},
  MRNUMBER = {3270588},
MRREVIEWER = {Pawel\ Sosna},
       DOI = {10.1007/s10240-013-0059-9},
       URL = {https://doi.org/10.1007/s10240-013-0059-9},
}

@book {CLS,
    AUTHOR = {Cox, David A. and Little, John B. and Schenck, Henry K.},
     TITLE = {Toric varieties},
    SERIES = {Graduate Studies in Mathematics},
    VOLUME = {124},
 PUBLISHER = {American Mathematical Society, Providence, RI},
      YEAR = {2011},
     PAGES = {xxiv+841},
      ISBN = {978-0-8218-4819-7},
   MRCLASS = {14M25 (05A15 05E45 52B12)},
  MRNUMBER = {2810322},
MRREVIEWER = {Ivan\ Arzhantsev},
       DOI = {10.1090/gsm/124},
       URL = {https://doi.org/10.1090/gsm/124},
}

@article {FJR18,
    AUTHOR = {Fan, Huijun and Jarvis, Tyler J. and Ruan, Yongbin},
     TITLE = {A mathematical theory of the gauged linear sigma model},
   JOURNAL = {Geom. Topol.},
  FJOURNAL = {Geometry \& Topology},
    VOLUME = {22},
      YEAR = {2018},
    NUMBER = {1},
     PAGES = {235--303},
      ISSN = {1465-3060,1364-0380},
   MRCLASS = {14N35 (14D23 14J32 14L24 53D45 81T40 81T60)},
  MRNUMBER = {3720344},
MRREVIEWER = {Hsian-Hua\ Tseng},
       DOI = {10.2140/gt.2018.22.235},
       URL = {https://doi.org/10.2140/gt.2018.22.235},
}

@article {FK18,
    AUTHOR = {Favero, David and Kelly, Tyler L.},
     TITLE = {Fractional {C}alabi-{Y}au categories from {L}andau-{G}inzburg
              models},
   JOURNAL = {Algebr. Geom.},
  FJOURNAL = {Algebraic Geometry},
    VOLUME = {5},
      YEAR = {2018},
    NUMBER = {5},
     PAGES = {596--649},
      ISSN = {2313-1691,2214-2584},
   MRCLASS = {14F05 (14M25)},
  MRNUMBER = {3847207},
MRREVIEWER = {Sofia\ Tirabassi},
       DOI = {10.14231/AG-2018-016},
       URL = {https://doi.org/10.14231/AG-2018-016},
}

@article {KS22,
    AUTHOR = {Kite, Alex and Segal, Ed},
     TITLE = {Discriminants and semi-orthogonal decompositions},
   JOURNAL = {Comm. Math. Phys.},
  FJOURNAL = {Communications in Mathematical Physics},
    VOLUME = {390},
      YEAR = {2022},
    NUMBER = {2},
     PAGES = {907--931},
      ISSN = {0010-3616,1432-0916},
   MRCLASS = {14F08 (14J33 16E35)},
  MRNUMBER = {4384720},
MRREVIEWER = {Franco\ Rota},
       DOI = {10.1007/s00220-021-04298-2},
       URL = {https://doi.org/10.1007/s00220-021-04298-2},
}

@article {Klee,
    AUTHOR = {Klee, Jr., V. L.},
     TITLE = {Separation properties of convex cones},
   JOURNAL = {Proc. Amer. Math. Soc.},
  FJOURNAL = {Proceedings of the American Mathematical Society},
    VOLUME = {6},
      YEAR = {1955},
     PAGES = {313--318},
      ISSN = {0002-9939,1088-6826},
   MRCLASS = {46.0X},
  MRNUMBER = {68113},
MRREVIEWER = {M.\ M.\ Day},
       DOI = {10.2307/2032366},
       URL = {https://doi.org/10.2307/2032366},
}

@article {P11,
    AUTHOR = {Positselski, Leonid},
     TITLE = {Two kinds of derived categories, {K}oszul duality, and
              comodule-contramodule correspondence},
   JOURNAL = {Mem. Amer. Math. Soc.},
  FJOURNAL = {Memoirs of the American Mathematical Society},
    VOLUME = {212},
      YEAR = {2011},
    NUMBER = {996},
     PAGES = {vi+133},
      ISSN = {0065-9266,1947-6221},
      ISBN = {978-0-8218-5296-5},
   MRCLASS = {16E35 (16E45 16S37 16T15 18E30)},
  MRNUMBER = {2830562},
MRREVIEWER = {Ji-Wei\ He},
       DOI = {10.1090/S0065-9266-2010-00631-8},
       URL = {https://doi.org/10.1090/S0065-9266-2010-00631-8},
}

@incollection {W93,
    AUTHOR = {Witten, Edward},
     TITLE = {Phases of {$N=2$} theories in two dimensions},
 BOOKTITLE = {Mirror symmetry, {II}},
    SERIES = {AMS/IP Stud. Adv. Math.},
    VOLUME = {1},
     PAGES = {143--211},
 PUBLISHER = {Amer. Math. Soc., Providence, RI},
      YEAR = {1997},
      ISBN = {0-8218-0634-3},
   MRCLASS = {81T40 (58Z05 81T60)},
  MRNUMBER = {1416338},
       DOI = {10.1090/amsip/001/09},
       URL = {https://doi.org/10.1090/amsip/001/09},
}

@misc {BFKv2,
    AUTHOR = {Ballard, Matthew and Favero, David and Katzarkov, Ludmil},
     TITLE = {Variation of geometric invariant theory quotients and derived
              categories},
      year={2012},
      eprint={1203.6643v2},
      archivePrefix={arXiv},
      primaryClass={math.AG},
      note={\href{https://arxiv.org/abs/1203.6643v2}{arxiv:1203.6643v2}}, 
}

@misc {BKM,
    AUTHOR = {Bayer, Arend and Kuznetsov, Alexander and Macr\`i, Emmanuele},
     TITLE = {Mukai bundles on {F}ano threefolds},
      year={2024},
      eprint={2402.07154},
      archivePrefix={arXiv},
      primaryClass={math.AG},
      note={\href{https://arxiv.org/abs/2402.07154}{arXiv:2402.07154}}, 
}

@article {KuzShin,
    AUTHOR = {Kuznetsov, Alexander and Shinder, Evgeny},
     TITLE = {Derived categories of {F}ano threefolds and degenerations},
   JOURNAL = {Invent. Math.},
  FJOURNAL = {Inventiones Mathematicae},
    VOLUME = {239},
      YEAR = {2025},
    NUMBER = {2},
     PAGES = {377--430},
      ISSN = {0020-9910,1432-1297},
   MRCLASS = {14F08 (14J45)},
  MRNUMBER = {4850600},
       DOI = {10.1007/s00222-024-01304-x},
       URL = {https://doi.org/10.1007/s00222-024-01304-x},
}

@article {SVdB,
    AUTHOR = {{\v{S}}penko, \v{S}pela and Van den Bergh, Michel},
     TITLE = {Semi-orthogonal decompositions of {GIT} quotient stacks},
   JOURNAL = {Selecta Math. (N.S.)},
  FJOURNAL = {Selecta Mathematica. New Series},
    VOLUME = {27},
      YEAR = {2021},
    NUMBER = {2},
     PAGES = {Paper No. 16, 43},
      ISSN = {1022-1824,1420-9020},
   MRCLASS = {14L24 (14A22 16E35)},
  MRNUMBER = {4227868},
MRREVIEWER = {Alfonso\ Zamora},
       DOI = {10.1007/s00029-021-00628-3},
       URL = {https://doi.org/10.1007/s00029-021-00628-3},
}

@article {BBF,
    AUTHOR = {Bernardara, Marcello and Bolognesi, Michele and Faenzi,
              Daniele},
     TITLE = {Homological projective duality for determinantal varieties},
   JOURNAL = {Adv. Math.},
  FJOURNAL = {Advances in Mathematics},
    VOLUME = {296},
      YEAR = {2016},
     PAGES = {181--209},
      ISSN = {0001-8708,1090-2082},
   MRCLASS = {14F05 (14M12 18E30)},
  MRNUMBER = {3490767},
MRREVIEWER = {Pawel\ Sosna},
       DOI = {10.1016/j.aim.2016.04.003},
       URL = {https://doi.org/10.1016/j.aim.2016.04.003},
}

@article {DHL,
    AUTHOR = {Halpern-Leistner, Daniel},
     TITLE = {The derived category of a {GIT} quotient},
   JOURNAL = {J. Amer. Math. Soc.},
  FJOURNAL = {Journal of the American Mathematical Society},
    VOLUME = {28},
      YEAR = {2015},
    NUMBER = {3},
     PAGES = {871--912},
      ISSN = {0894-0347,1088-6834},
   MRCLASS = {14F05 (14L30)},
  MRNUMBER = {3327537},
MRREVIEWER = {Scott\ R.\ Nollet},
       DOI = {10.1090/S0894-0347-2014-00815-8},
       URL = {https://doi.org/10.1090/S0894-0347-2014-00815-8},
}

@article {HW,
    AUTHOR = {Herbst, Manfred and Walcher, Johannes},
     TITLE = {On the unipotence of autoequivalences of toric complete
              intersection {C}alabi-{Y}au categories},
   JOURNAL = {Math. Ann.},
  FJOURNAL = {Mathematische Annalen},
    VOLUME = {353},
      YEAR = {2012},
    NUMBER = {3},
     PAGES = {783--802},
      ISSN = {0025-5831,1432-1807},
   MRCLASS = {14F05 (14J32 18E30)},
  MRNUMBER = {2923950},
MRREVIEWER = {Richard\ P.\ Thomas},
       DOI = {10.1007/s00208-011-0704-x},
       URL = {https://doi.org/10.1007/s00208-011-0704-x},
}

@article {AT,
    AUTHOR = {Addington, Nicolas and Thomas, Richard},
     TITLE = {Hodge theory and derived categories of cubic fourfolds},
   JOURNAL = {Duke Math. J.},
  FJOURNAL = {Duke Mathematical Journal},
    VOLUME = {163},
      YEAR = {2014},
    NUMBER = {10},
     PAGES = {1885--1927},
      ISSN = {0012-7094,1547-7398},
   MRCLASS = {14F05 (14J10 14J28 14J35)},
  MRNUMBER = {3229044},
MRREVIEWER = {Pawel\ Sosna},
       DOI = {10.1215/00127094-2738639},
       URL = {https://doi.org/10.1215/00127094-2738639},
}

@article {BMMS,
    AUTHOR = {Bernardara, Marcello and Macr\`\i , Emanuele and Mehrotra,
              Sukhendu and Stellari, Paolo},
     TITLE = {A categorical invariant for cubic threefolds},
   JOURNAL = {Adv. Math.},
  FJOURNAL = {Advances in Mathematics},
    VOLUME = {229},
      YEAR = {2012},
    NUMBER = {2},
     PAGES = {770--803},
      ISSN = {0001-8708,1090-2082},
   MRCLASS = {14F05 (14C34 14J30)},
  MRNUMBER = {2855078},
       DOI = {10.1016/j.aim.2011.10.007},
       URL = {https://doi.org/10.1016/j.aim.2011.10.007},
}

@article {HR,
    AUTHOR = {Huybrechts, Daniel and Rennemo, J\o rgen Vold},
     TITLE = {Hochschild cohomology versus the {J}acobian ring and the
              {T}orelli theorem for cubic fourfolds},
   JOURNAL = {Algebr. Geom.},
  FJOURNAL = {Algebraic Geometry},
    VOLUME = {6},
      YEAR = {2019},
    NUMBER = {1},
     PAGES = {76--99},
      ISSN = {2313-1691,2214-2584},
   MRCLASS = {14C34 (14F05 14J28 14J35)},
  MRNUMBER = {3904800},
MRREVIEWER = {Zhi\ Jiang},
       DOI = {10.14231/AG-2019-005},
       URL = {https://doi.org/10.14231/AG-2019-005},
}

@article {PS,
    AUTHOR = {Pertusi, Laura and Stellari, Paolo},
     TITLE = {Categorical {T}orelli theorems: results and open problems},
   JOURNAL = {Rend. Circ. Mat. Palermo (2)},
  FJOURNAL = {Rendiconti del Circolo Matematico di Palermo. Second Series},
    VOLUME = {72},
      YEAR = {2023},
    NUMBER = {5},
     PAGES = {2949--3011},
      ISSN = {0009-725X,1973-4409},
   MRCLASS = {14F08 (14C34 14J28 14J45 18G80)},
  MRNUMBER = {4609668},
MRREVIEWER = {Shintarou\ Yanagida},
       DOI = {10.1007/s12215-022-00796-x},
       URL = {https://doi.org/10.1007/s12215-022-00796-x},
}

@article {SheridanSmith,
    AUTHOR = {Sheridan, Nick and Smith, Ivan},
     TITLE = {Homological mirror symmetry for generalized {G}reene-{P}lesser
              mirrors},
   JOURNAL = {Invent. Math.},
  FJOURNAL = {Inventiones Mathematicae},
    VOLUME = {224},
      YEAR = {2021},
    NUMBER = {2},
     PAGES = {627--682},
      ISSN = {0020-9910,1432-1297},
   MRCLASS = {14J33 (14F08)},
  MRNUMBER = {4243021},
MRREVIEWER = {Arnav\ Tripathy},
       DOI = {10.1007/s00222-020-01018-w},
       URL = {https://doi.org/10.1007/s00222-020-01018-w},
}

@article {BLMS,
    AUTHOR = {Bayer, Arend and Lahoz, Mart\'{\i} and Macr\`\i , Emanuele and
              Stellari, Paolo},
     TITLE = {Stability conditions on {K}uznetsov components},
      NOTE = {With an appendix by Bayer, Lahoz, Macr\`\i , Stellari and X.
              Zhao},
   JOURNAL = {Ann. Sci. \'{E}c. Norm. Sup\'{e}r. (4)},
  FJOURNAL = {Annales Scientifiques de l'\'{E}cole Normale Sup\'{e}rieure.
              Quatri\`eme S\'{e}rie},
    VOLUME = {56},
      YEAR = {2023},
    NUMBER = {2},
     PAGES = {517--570},
      ISSN = {0012-9593,1873-2151},
   MRCLASS = {14F08 (14C34 18G80)},
  MRNUMBER = {4598728},
       DOI = {10.24033/asens.2539},
       URL = {https://doi.org/10.24033/asens.2539},
}

@inproceedings {KuzICM,
    AUTHOR = {Kuznetsov, Alexander},
     TITLE = {Semiorthogonal decompositions in algebraic geometry},
 BOOKTITLE = {Proceedings of the {I}nternational {C}ongress of
              {M}athematicians---{S}eoul 2014. {V}ol. {II}},
     PAGES = {635--660},
 PUBLISHER = {Kyung Moon Sa, Seoul},
      YEAR = {2014},
      ISBN = {978-89-6105-805-6; 978-89-6105-803-2},
   MRCLASS = {18E30 (14F05)},
  MRNUMBER = {3728631},
MRREVIEWER = {Beren\ Sanders},
}

@incollection {Kuz10,
    AUTHOR = {Kuznetsov, Alexander},
     TITLE = {Derived categories of cubic fourfolds},
 BOOKTITLE = {Cohomological and geometric approaches to rationality
              problems},
    SERIES = {Progr. Math.},
    VOLUME = {282},
     PAGES = {219--243},
 PUBLISHER = {Birkh\"{a}user Boston, Boston, MA},
      YEAR = {2010},
      ISBN = {978-0-8176-4933-3},
   MRCLASS = {14F05 (14E05)},
  MRNUMBER = {2605171},
MRREVIEWER = {Paolo\ Stellari},
       DOI = {10.1007/978-0-8176-4934-0\{_}9}

@article {Mukai,
    AUTHOR = {Mukai, Shigeru},
     TITLE = {Biregular classification of {F}ano {$3$}-folds and {F}ano
              manifolds of coindex {$3$}},
   JOURNAL = {Proc. Nat. Acad. Sci. U.S.A.},
  FJOURNAL = {Proceedings of the National Academy of Sciences of the United
              States of America},
    VOLUME = {86},
      YEAR = {1989},
    NUMBER = {9},
     PAGES = {3000--3002},
      ISSN = {0027-8424},
   MRCLASS = {14J30 (14J35 14J40)},
  MRNUMBER = {995400},
MRREVIEWER = {A.\ S.\ Tikhomirov},
       DOI = {10.1073/pnas.86.9.3000},
       URL = {https://doi.org/10.1073/pnas.86.9.3000},
}

@misc {KimoiEd,
    AUTHOR = {Kemboi, Kimoi and Segal, Ed},
     TITLE = {The {F}ano of lines, the {K}uznetsov component, and a flop},
      year={2025},
      eprint={2506.20559},
      archivePrefix={arXiv},
      primaryClass={math.AG},
      note={\href{https://arxiv.org/abs/2506.20559}{arXiv:2506.20559}}, 
}

@misc{HHP,
    AUTHOR = {Herbst, Manfred and Hori, Kentaro and Page, David},
    TITLE = {Phases Of {N}=2 Theories In 1+1 Dimensions With Boundary},
    year = {2008},
    eprint = {0803.2045},
    archivePrefix = {arXiv},
    primaryClass = {hep-th},
    reportNumber = {DESY-07-154, CERN-PH-TH-2008-048},
    note={\href{https://arxiv.org/abs/0803.2045}{arXiv:0803.2045}}, 
}

@article {Segal,
    AUTHOR = {Segal, Ed},
     TITLE = {Equivalence between {GIT} quotients of {L}andau-{G}inzburg
              {B}-models},
   JOURNAL = {Comm. Math. Phys.},
  FJOURNAL = {Communications in Mathematical Physics},
    VOLUME = {304},
      YEAR = {2011},
    NUMBER = {2},
     PAGES = {411--432},
      ISSN = {0010-3616,1432-0916},
   MRCLASS = {81T45 (14F05 14J32 14L24 81T40)},
  MRNUMBER = {2795327},
MRREVIEWER = {Eric\ R.\ Sharpe},
       DOI = {10.1007/s00220-011-1232-y},
       URL = {https://doi.org/10.1007/s00220-011-1232-y},
}

@article {KKLL,
    AUTHOR = {Kiem, Young-Hoon and Kim, In-Kyun and Lee, Hwayoung and Lee,
              Kyoung-Seog},
     TITLE = {All complete intersection varieties are {F}ano visitors},
   JOURNAL = {Adv. Math.},
  FJOURNAL = {Advances in Mathematics},
    VOLUME = {311},
      YEAR = {2017},
     PAGES = {649--661},
      ISSN = {0001-8708,1090-2082},
   MRCLASS = {14F05 (14J45 14M10 18E30)},
  MRNUMBER = {3628227},
MRREVIEWER = {Shintarou\ Yanagida},
       DOI = {10.1016/j.aim.2017.02.030},
       URL = {https://doi.org/10.1016/j.aim.2017.02.030},
}

@article {Hirano,
    AUTHOR = {Hirano, Yuki},
     TITLE = {Derived {K}n\"{o}rrer periodicity and {O}rlov's theorem for
              gauged {L}andau-{G}inzburg models},
   JOURNAL = {Compos. Math.},
  FJOURNAL = {Compositio Mathematica},
    VOLUME = {153},
      YEAR = {2017},
    NUMBER = {5},
     PAGES = {973--1007},
      ISSN = {0010-437X,1570-5846},
   MRCLASS = {14F05 (14J33 14L24 18E30)},
  MRNUMBER = {3631231},
MRREVIEWER = {Tyler\ L.\ Kelly},
       DOI = {10.1112/S0010437X16008344},
       URL = {https://doi.org/10.1112/S0010437X16008344},
}

@article {Isik,
    AUTHOR = {Isik, Mehmet Umut},
     TITLE = {Equivalence of the derived category of a variety with a
              singularity category},
   JOURNAL = {Int. Math. Res. Not. IMRN},
  FJOURNAL = {International Mathematics Research Notices. IMRN},
      YEAR = {2013},
    NUMBER = {12},
     PAGES = {2787--2808},
      ISSN = {1073-7928,1687-0247},
   MRCLASS = {14F05 (14B05)},
  MRNUMBER = {3071664},
MRREVIEWER = {Pawel\ Sosna},
       DOI = {10.1093/imrn/rns125},
       URL = {https://doi.org/10.1093/imrn/rns125},
}

@article {Ress00,
    AUTHOR = {Ressayre, N.},
     TITLE = {The {GIT}-equivalence for {$G$}-line bundles},
   JOURNAL = {Geom. Dedicata},
  FJOURNAL = {Geometriae Dedicata},
    VOLUME = {81},
      YEAR = {2000},
    NUMBER = {1-3},
     PAGES = {295--324},
      ISSN = {0046-5755,1572-9168},
   MRCLASS = {14L30 (14L24)},
  MRNUMBER = {1772211},
MRREVIEWER = {Dmitry\ A.\ Timash\"{e}v},
       DOI = {10.1023/A:1005275524522},
       URL = {https://doi.org/10.1023/A:1005275524522},
}

@article {Shipman,
    AUTHOR = {Shipman, Ian},
     TITLE = {A geometric approach to {O}rlov's theorem},
   JOURNAL = {Compos. Math.},
  FJOURNAL = {Compositio Mathematica},
    VOLUME = {148},
      YEAR = {2012},
    NUMBER = {5},
     PAGES = {1365--1389},
      ISSN = {0010-437X,1570-5846},
   MRCLASS = {14F05 (14D23 14J32 14J70 81T30)},
  MRNUMBER = {2982435},
MRREVIEWER = {Antony\ Maciocia},
       DOI = {10.1112/S0010437X12000255},
       URL = {https://doi.org/10.1112/S0010437X12000255},
}

@article {Kuz09,
    AUTHOR = {Kuznetsov, A. G.},
     TITLE = {Derived categories of {F}ano threefolds},
   JOURNAL = {Tr. Mat. Inst. Steklova},
  FJOURNAL = {Trudy Matematicheskogo Instituta Imeni V. A. Steklova},
    VOLUME = {264},
      YEAR = {2009},
    NUMBER = {Mnogomernaya Algebraicheskaya Geometriya},
     PAGES = {116--128},
      ISSN = {0371-9685,3034-1809},
   MRCLASS = {14F05 (14J10 14J45)},
  MRNUMBER = {2590842},
MRREVIEWER = {Alexandr\ V.\ Pukhlikov},
       DOI = {10.1134/S0081543809010143},
       URL = {https://doi.org/10.1134/S0081543809010143},
}
\bibliographystyle{amsalpha}

\end{document}